\definecolor{halfgray}{gray}{0.55} 
\definecolor{webgreen}{rgb}{0,0.5,0}
\definecolor{webbrown}{rgb}{.6,0,0} \hypersetup{%
\newtheorem{theorem}{Theorem}[section]
\newtheorem{lemma}[theorem]{Lemma}
\newtheorem{proposition}[theorem]{Proposition}
\newtheorem{corollary}[theorem]{Corollary}
\theoremstyle{definition}
\newtheorem{remark}[theorem]{Remark}
\def\Pd{\mathbb{P}^{d-1}}
\def\P{{\mathbb{P}}}
\def\real{\mathbb{R}}
\def\P{{\mathbb{P}}}
\def\exteriorj{\Lambda^{j}\left(\real^d\right)}
\def\grassj{\textrm{Grass}(j,d)}
\newcommand{\ip}[1]{{\left\langle \, #1 \, \right\rangle}}
\newcommand{\norm}[1]{{\left\lVert \, #1 \, \right\rVert}}
\def\ker{\operatorname{Ker}}
\def\d{\operatorname{dist}}
\keywords{Lyapunov exponents, Oseledets subspaces, continuity, linear cocycles}
 \subjclass[2010]{Primary: 37H15, 37A20; Secondary: 37D25}
\title[Lyapunov exponents versus Oseledets subspaces]{Continuity of Lyapunov exponents is equivalent to continuity of Oseledets subspaces}
\author{Lucas Backes}
\address{\noindent Departamento de Matem\'atica, Universidade Federal do Rio Grande do Sul, Av. Bento Gon\c{c}alves 9500, CEP 91509-900, Porto Alegre, RS, Brazil.
\newline e-mail: \rm
  \texttt{lhbackes@impa.br} }
\author{Mauricio Poletti}
\address{IMPA - Estrada D. Castorina 110, Jardim Bot\^anico, CEP 22460-320, Rio de Janeiro, RJ, Brazil.
\newline e-mail: \rm
  \texttt{mpoletti@impa.br}  }
\date{\today}
\begin{document}

\begin{abstract}
We prove that, for semi-invertible continuous cocycles, continuity of Lyapunov exponents is equivalent to continuity, in measure, of Oseledets subspaces.
\end{abstract}

\maketitle

\section{Introduction}

Consider an invertible ergodic measure preserving dynamical system $f:M \rightarrow M$ defined on a measure space $(M,\mathcal{A},\mu)$ and a measurable matrix-valued map $A:M\rightarrow M(d, \mathbb{R})$. The pair $(f,A)$ is called a \textit{semi-invertible linear cocycle} (or just \textit{linear cocycle} for short). Sometimes one calls linear cocycle (over $f$ generated by $A$), instead, the sequence $\lbrace A^n\rbrace _{n\in \mathbb{N}}$ defined by
\begin{equation*}\label{def:cocycles}
A^n(x)=
\left\{
	\begin{array}{ll}
		A(f^{n-1}(x))\ldots A(f(x))A(x)  & \mbox{if } n>0 \\
		Id & \mbox{if } n=0 \\
	\end{array}
\right.
\end{equation*}
for all $x\in M$. The word `semi-invertible' refers to the fact that the action of the underlying dynamical system $f$ is invertible while the action on the fibers given by $A$ may fail to be invertible.

Under certain integrability conditions, it was proved in \cite{FLQ10} that for $\mu$-almost every point $x\in M$ there exist numbers $\lambda _1>\ldots > \lambda _{l}\geq -\infty$, called \textit{Lyapunov exponents}, and a direct sum decomposition $\mathbb{R}^d=E^{1,A}_{x}\oplus \ldots \oplus E^{l,A}_{x}$ into vector subspaces which are called \textit{Oseledets subspaces} and depend measurable on $x$ such that, for every $1\leq i \leq l$,
\begin{itemize}
\item dim$(E^{i,A}_{x})$ is constant,
\item $A(x)E^{i,A}_{x}\subseteq E^{i,A}_{f(x)}$ with equality when $\lambda _i>-\infty$
\end{itemize}
and
\begin{itemize}
\item $\lambda _i =\lim _{n\rightarrow +\infty} \dfrac{1}{n}\log \parallel A^n(x)v\parallel$ 
for every non-zero $v\in E^{i,A}_{x}$.
\end{itemize}
This result extends a famous theorem due to Oseledets \cite{Ose68} known as the \textit{multiplicative ergodic theorem} which was originally stated in both, \textit{invertible} (both $f$ and the matrices are assumed to be invertible) and \textit{non-invertible} (neither $f$ nor the matrices are assumed to be invertible) settings (see also \cite{LLE}). While in the invertible case the conclusion is similar to the conclusion above (except that all Lyapunov exponents are finite), in the non-invertible case, instead of a direct sum decomposition into invariant vector subspaces, one only get an invariant \textit{filtration} (a sequence of nested subspaces) of $\real ^d$.

Lyapunov exponents are one of the most fundamental concepts in dynamical systems. For instance, the non-vanishing of the Lyapunov exponents of the derivative cocycle associated to a smooth dynamical system is the starting point for the whole branch of nonuniform hyperbolic theory (see \cite{BaP07}). As such important objects, it is natural that one tries to understand their behavior and describe its properties. 

One aspect that one could be interested in is, for instance, how do they vary when we perturb the cocycle $A$ while keeping the base dynamics $f$ fixed. It is well known that even in the invertible setting Lyapunov exponents may be highly discontinuous as functions of the cocycle \cite{Boc-un, Boc02}. Nevertheless, there are settings where one can get continuity \cite{BockerV, BBB, AvEV, MV15} and even real-analyticity \cite{Rue79a, Pe91}.

Similarly, one could be interested in understanding the continuity properties of the Oseledets subspaces (see \cite{ABF16, Bac, DK_b, DrF}). Since for each fixed $A$ and $1\leq i\leq l$ the map $x\to E^{i,A}_{x}$ is a measurable one, a natural notion of continuity to be considered would be continuity in measure of the map $A\to E^{i,A}_{x}$.

In the present work, rather than proving continuity of Lyapunov exponents or Oseledets subspaces themselves, we are interested in understanding in the semi-invertible setting how continuity of Lyapunov exponents relates to continuity, in measure, of the Oseledets subspaces. Indeed, as a consequence of our main result we get that these notions are actually equivalent. More precisely (see Section \ref{sec: definitions and statements} for precise definitions and statements),

\begin{theorem}
 $A$ is a continuity point for the Lyapunov exponents if and only if it is a continuity point for the Oseledets subspaces.
\end{theorem}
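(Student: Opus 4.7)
The plan is to prove the two implications separately. The overall strategy in both directions is to combine the standard upper semicontinuity of partial sums of Lyapunov exponents, Egorov's theorem, and the compactness of the Grassmannian of $d_i$-planes in $\mathbb{R}^d$.

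For ``Oseledets continuity $\Rightarrow$ Lyapunov continuity'', my starting point is that the partial sum $A \mapsto \sum_{i \leq k} \lambda_i(A)\, \dim E^{i,A}_x$ is upper semicontinuous, being the infimum over $n$ of the continuous functionals $\frac{1}{n}\int \log \|\Lambda^{k} A^n(x)\|\, d\mu$. It therefore suffices to prove lower semicontinuity of each $\lambda_i$. Given $A_m \to A$ with $E^{i,A_m}_x \to E^{i,A}_x$ in measure, I would pass to an a.e.\ convergent sub-subsequence and apply Egorov's theorem to isolate a set $K$ of measure close to $1$ on which convergence is uniform. On $K$, measurable orthonormal frames of $E^{i,A_m}_x$ converge uniformly to frames of $E^{i,A}_x$, so that for each fixed iterate $N$ the integrals $\tfrac{1}{N}\int_K \log \|\Lambda^{d_i} A_m^N|_{E^{i,A_m}_x}\|\,d\mu$ converge to their $A$-counterparts. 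Combined with the Kingman subadditive ergodic theorem applied to an induced cocycle on $K$, this transfers convergence to the Lyapunov exponents themselves.

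For the converse, I would argue by contradiction: assume $A_m \to A$ with $\lambda_i(A_m) \to \lambda_i(A)$ for every $i$, yet $E^{i,A_m}_x$ fails to converge in measure to $E^{i,A}_x$ for some index $i$. Using compactness of the Grassmannian together with metrizability of convergence in measure valued in a compact metric space, I extract a subsequence along which $E^{i,A_m}_x$ converges to some measurable field $F^{i}_x$ with $F^{i}_x \neq E^{i,A}_x$ on a positive measure set. Two properties of $F^{i}_x$ will then yield the desired contradiction: first, $F^{i}_x$ is $A$-invariant of dimension $\dim E^{i,A}_x$, because $A$-equivariance is preserved under limits in measure (applied to the identity $A(x) E^{i,A_m}_x = E^{i,A_m}_{f(x)}$ valid when $\lambda_i > -\infty$, and similar pairing of sub-bundles otherwise); second, the Lyapunov exponent of $A$ along $F^{i}_x$ equals $\lambda_i(A)$. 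Uniqueness of the Oseledets decomposition then forces $F^{i}_x = E^{i,A}_x$ almost everywhere, contradicting the choice of $F^{i}_x$.

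The main obstacle is the justification of this second property: one must transfer the growth rate $\lambda_i(A_m)$ realized by vectors in $E^{i,A_m}_x$ under $A_m$ to the growth rate of their limit $v \in F^{i}_x$ under the limit cocycle $A$, which is an exchange of the limit in the perturbation index $m$ with the limit in the iterate $n$. The technical tool is essentially the same as in the first direction: pairing Egorov on a large set $K$ with the continuity of $A \mapsto A^N$ for each fixed $N$, then a diagonal extraction. Concretely, for unit vector sections $v_m \in E^{i,A_m}_x$ converging uniformly on $K$ to $v \in F^{i}_x$, the integrals $\tfrac{1}{N}\int_K \log \|A_m^N(x) v_m(x)\|\, d\mu$ converge in the order ``$m$ first, then $N$'', while the subadditive ergodic theorem controls the comparison with the pointwise growth rate and ensures that the order of limits may indeed be exchanged on $K$.
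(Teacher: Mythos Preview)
There is a genuine gap in your direction ``Lyapunov continuity $\Rightarrow$ Oseledets continuity''. You claim that, by ``compactness of the Grassmannian together with metrizability of convergence in measure'', you can extract a subsequence of the sections $x\mapsto E^{i,A_m}_x$ converging in measure to some measurable field $F^i_x$. But the space $L^0(M,\mu;\mathrm{Grass}(d_i,d))$ with the topology of convergence in measure is \emph{not} sequentially compact, even though the target is compact: already for $X=\{0,1\}$ the Rademacher-type sequence $f_n(x)=\lfloor 2^n x\rfloor \bmod 2$ on $[0,1]$ has no subsequence converging in measure. So the limiting field $F^i_x$ need not exist, and the rest of your argument (invariance of $F^i_x$, equality of its exponent with $\lambda_i(A)$, and uniqueness of Oseledets) never gets off the ground. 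The paper's remedy is precisely to replace sections by probability measures on the bundle $M\times\mathbb{P}^{d-1}$ (which \emph{is} weak$^*$ compact), take limits there, and then prove a structural result (their Proposition~3.1) saying that any $F_A$-invariant measure projecting to $\mu$ is a convex combination of measures living on $E^{u_i}$ and $E^{s_i}$; combined with the identity $\gamma_1(A)=\int\varphi_A\,dm$ this forces the limit measure to be the Dirac section on $E^{1,A}$, from which convergence in measure of the sections is then read off via Lusin. The passage to general $i$ is done via exterior powers and the adjoint cocycle, not by treating each $E^{i,A}$ directly.

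A second, related weakness is your handling of the interchange of limits in $m$ and $N$, which you yourself flag as the main obstacle. Invoking ``Kingman on an induced cocycle on $K$'' and ``diagonal extraction'' does not resolve it: the rate of convergence in Kingman's theorem depends on $m$, and nothing you wrote controls this uniformly. The paper avoids the issue entirely by observing that, for any $F_A$-invariant measure $m$ supported on $E^{j,A}$, one has the \emph{single-step} formula $\lambda_j(A)=\int\varphi_A\,dm$ with $\varphi_A(x,v)=\log(\|A(x)v\|/\|v\|)$ (their Remark~3.3). This turns the Lyapunov exponent into the integral of a function that is continuous off $\ker(A)$, so weak$^*$ convergence of measures plus uniform convergence $A_m\to A$ gives the result directly, provided one checks $m(\ker(A))=0$ (their Lemma~4.1). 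Your Egorov/induced-cocycle machinery is aiming at the same target but never actually hits it; if you rewrite the argument using the one-step Birkhoff identity you will find that the ``Oseledets $\Rightarrow$ Lyapunov'' direction becomes essentially the paper's Section~5, while for the converse you still need the measure-theoretic compactness and the decomposition proposition described above.
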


As a simple yet interesting application of our result combined with Theorem 5 of \cite{BcV05} we get that, in the invertible setting, if $A$ is a continuity point for the Oseledets subspaces then the Oseledets splitting at $x$ is either dominated or trivial at $\mu$-almost every $x\in M$. While the first option is quite expected since dominated splittings vary continuously, the second one is not so evident. Moreover, using results of \cite{BeS16} we get a similar conclusion in the semi-invertible setting when restricted to stochastic matrices.

The proof of our main result is based on an analysis of the dynamics `induced' by the cocycle on the projective space. Observe that, since the map $A$ takes values in $M(d,\real)$, its natural action on the projective space is not well defined. We overcome this issue introducing the notion of what we have called ``semi-projective cocycle". 

In the invertible setting and under random perturbations a similar stability result was gotten in \cite{Oc99}.

\section{Definitions and Statements}\label{sec: definitions and statements}

Let $(M,d)$ be a compact metric space, $\mu$ a measure defined on the Borel sets of $(M,d)$ and $f: M \to M $ a measure preserving homeomorphism. Assume also that $\mu$ is ergodic. Given a continuous map $A: M\to M(d,\mathbb{R})$ such that $\int \log ^{+}\norm{A(x)}d\mu (x)<\infty$, let us denote by 
\begin{displaymath}
\lambda _1 (A)>\lambda _2 (A) > \dots >\lambda _l (A)\geq -\infty
\end{displaymath}
the Lyapunov exponents of the cocycle $(f,A)$, by $d_i(A)$ the dimension of the Oseledets subspace associated with $\lambda _i(A)$ for $1\leq i\leq l$ and by
\begin{displaymath}
\gamma _1 (A)\geq \gamma _2 (A) \geq \dots \geq \gamma _d (A)
\end{displaymath}
the Lyapunov exponents of $(f,A)$ counted with multiplicities. Since our base dynamics is going to be fixed, we are going to refer to $\lambda _i(A)$ and $\gamma _i(A)$ simply as the Lyapunov exponents of $A$. 

Let $C^0(M)$ be the space of continuous maps $A: M\to M(d,\mathbb{R})$. We endow this space with the uniform topology which is generated by norm
\begin{displaymath}
\parallel A \parallel _{\infty}:= \sup _{x\in M} \parallel A(x)\parallel.
\end{displaymath}

We say that $A\in C^0(M)$ is a \textit{continuity point for the Lyapunov exponents} if for every sequence $\{A_k\}_k \subset C^0(M)$ converging to $A$ we have $\lim_{k\to \infty}\gamma_i(A_k)= \gamma _i(A)$ for every $1\leq i \leq d$. Observe that in this case for every $k$ sufficiently large we have
$$\gamma _1(A_k)\geq\gamma _{\tilde{d}_1}(A_k)>\gamma _{\tilde{d}_1+1}(A_k)\geq\gamma _{\tilde{d}_2}(A_k)>\ldots>\gamma _{\tilde{d}_{l-1}+1}(A_k)\geq\gamma _d(A_k)$$
where $\tilde{d}_i=\sum _{j=1}^{i}d_j(A)$ for every $1\leq i\leq l$. In particular, $A_k$ has at least $l$ different Lyapunov exponents and the sum of the dimensions of the Oseledets subspaces associated with $\gamma _{\tilde{d}_{j-1}+1}(A_k),\ldots ,\gamma _{\tilde{d}_j}(A_k)$ coincide with the dimension of $E^{j,A}_x$ for every $1\leq j\leq l$ where $\tilde{d}_0=0$. This motivates the following definition.

Given a sequence $\{A_k\}_k \subset C^0(M)$ converging to $A\in C^0(M)$ we say that the \textit{Oseledets subspaces of $A_k$ converge to those of $A$ with respect to the measure $\mu$} if for every $k$ sufficiently large there exists a direct sum decomposition $\mathbb{R}^d=F^{1,A_k}_{x}\oplus \ldots \oplus F^{l,A_k}_{x}$ into vector subspaces such that the following conditions are satisfied:
\begin{itemize}
\item[i)] $F^{i,A_k}_{x}=E^{j,A_k}_{x}\oplus E^{j+1,A_k}_{x}\oplus \ldots \oplus E^{j+t,A_k}_{x}$ for some $j\in \{1,\ldots ,l_k\}$ and $t\geq 0$;
\item[ii)] $\mbox{dim}(F^{i,A_k}_{x})= \mbox{dim}(E^{i,A}_{x})$ for every $i=1,\ldots ,l$;
\item[iii)] for every $\delta >0$ and $1\leq i\leq l$ we have 
$$\mu\left(\{ x\in M; \measuredangle(F^{i,A_k}_x,E^{i,A}_x )>\delta \} \right) \xrightarrow{k\to \infty} 0$$
\end{itemize}
where the \textit{angle} $\measuredangle{(E,F)}$ between two subspaces $E$ and $F$ of $\real ^d$ is defined as follows: given $w\in \real^d$ we define 
$$\d(w,E)=\inf_{v\in E} \norm{ w-v}.$$ 
It is easy to see that $\d(w,E)=\norm{w^{\perp}}$ where $w^{\perp}=w-\mbox{Proj}_{E}w$. More generally, we may consider the distance between $E$ and $F$ given by
\begin{equation}\label{def:distancia}
\d(E,F)=\sup_{v\in E,w\in F}\left\{ \d\left(\frac{v}{\norm{v}},F\right),\d\left(\frac{w}{\norm{w}},E\right)\right\}.
\end{equation}
Then, the angle between $E$ and $F$ is just $\measuredangle(E,F)=\sin^{-1}(\d(E,F))$. A cocycle $A$ is said to be a \textit{continuity point for the Oseledets decomposition with respect to the measure $\mu$} if the above requirements are satisfied for every sequence $\{A_k\}_k \subset C^0(M)$ converging to $A$.

Thus, our main result is the following.

\begin{theorem}\label{teo.principal}
Let $\{A_k\}_k \subset C^0(M)$ be a sequence converging to $A\in C^0(M)$. Then $\lim_{k\to \infty}\gamma_i(A_k)= \gamma _i(A)$ for every $1\leq i \leq d$ if and only if the Oseledets subspaces of $A_k$ converge to those of $A$ with respect to the measure $\mu$.
\end{theorem}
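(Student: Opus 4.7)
My plan is to prove the two implications separately.

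For the direction from Oseledets convergence to Lyapunov convergence, I would exploit that each $F^{i,A_k}_x$ is $A_k$-invariant of dimension $d_i(A)$ and, by condition (i), a direct sum of full Oseledets subspaces of $A_k$. Birkhoff's theorem then identifies
\[
\sum_{E^{j,A_k}\subset F^{i,A_k}} d_j(A_k)\,\lambda_j(A_k) \;=\; \int_M \log\bigl|\det\bigl(A_k(x)|_{F^{i,A_k}_x}\bigr)\bigr|\,d\mu(x).
\]
The convergence in measure of $F^{i,A_k}$ to $E^{i,A}$, together with uniform convergence $A_k\to A$ and the $\log^+$-integrability hypothesis, lets me pass to the limit in the right-hand side (dominated convergence) to obtain $d_i(A)\lambda_i(A)$. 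Summing over $i'\le i$ yields convergence of the partial sums $\sum_{j\le \tilde d_i}\gamma_j(A_k)\to \sum_{j\le \tilde d_i}\gamma_j(A)$; the upper semi-continuity of every partial sum (Kingman applied to exterior powers) then squeezes out individual convergence of each $\gamma_j(A_k)$.

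For the converse, I use continuity of Lyapunov exponents to group the Oseledets subspaces of $A_k$, for $k$ large, into $l$ clusters whose exponents approximate the $\lambda_i(A)$ and whose dimensions sum to $d_i(A)$; setting $F^{i,A_k}_x$ to be the sum of the $i$-th cluster automatically gives (i) and (ii). To establish (iii) I argue by contradiction: if it fails, extract a subsequence along which $\mu(\{x:\measuredangle(F^{i,A_k}_x,E^{i,A}_x)>\delta\})>\eta$ uniformly in $k$ for some $i,\delta,\eta>0$. Push $\mu$ forward by $x\mapsto(x,F^{1,A_k}_x,\dots,F^{l,A_k}_x)$ to obtain probability measures $\hat\mu_k$ on $M\times\prod_i\mathrm{Grass}(d_i(A),d)$ invariant under the natural $A_k$-skew product. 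By weak-$*$ compactness and uniform convergence $A_k\to A$, any weak-$*$ limit $\hat\mu$ is invariant under the $A$-skew product, and the easy-direction computation passes to the limit to give $\int\log|\det(A(x)|_{V_i})|\,d\hat\mu=d_i(A)\lambda_i(A)$ for every $i$.

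The crux is then a rigidity statement: any $A$-invariant measure on the Grassmannian bundle produced this way must be concentrated on the Oseledets decomposition of $A$. I would attempt this by passing to the flag picture $\mathcal F^i=V_1\oplus\cdots\oplus V_i$ (advantageous because the nested-flag condition is closed under weak-$*$ limits, unlike the direct-sum condition) and arguing inductively: the top block of dimension $d_1(A)$ must equal $E^{1,A}$ because $\int\log|\det(A|_V)|\,d\mu$ on $A$-invariant $d_1(A)$-dimensional subbundles is bounded above by $d_1(A)\lambda_1(A)$ with equality forcing $V=E^{1,A}$ (via Oseledets on the top exterior power), and one iterates in the quotient. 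The main obstacle is exactly this rigidity step in the semi-invertible setting: $A(x)$ need not act on $\mathbb{P}^{d-1}$, so the usual projective-dynamics tools do not apply directly, and invariant bundles may behave pathologically along directions with $\lambda_j=-\infty$. This is where I expect the ``semi-projective cocycle'' introduced in the paper to be essential, furnishing a well-defined surrogate action on which Furstenberg--Ledrappier-type invariance and rigidity arguments remain available. Once rigidity is secured, $\hat\mu$ is forced onto the graph of the Oseledets decomposition of $A$, contradicting the standing assumption that (iii) fails.
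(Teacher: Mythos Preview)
Your outline has the right architecture but contains a real gap in the ``easy'' direction and diverges substantially from the paper's route in the ``hard'' one.

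\textbf{Oseledets $\Rightarrow$ Lyapunov.} Your appeal to dominated convergence for
\[
\int_M \log\bigl|\det\bigl(A_k(x)|_{F^{i,A_k}_x}\bigr)\bigr|\,d\mu
\]
is not justified: the $\log^+$ hypothesis controls the integrand from above, but there is no integrable lower bound. In the semi-invertible setting $A(x)$ may have a kernel, and even for $i<l$ the determinant $\det(A_k|_{F^{i,A_k}_x})$ can be arbitrarily small on sets of positive measure; reverse Fatou gives you only $\limsup\le d_i(A)\lambda_i(A)$, not the matching $\liminf$. Your subsequent squeeze via upper semi-continuity of the partial sums $\sum_{j\le r}\gamma_j$ does not rescue this, since upper semi-continuity again yields only a $\limsup$ inequality. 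The paper avoids determinants entirely: it works on $\mathbb{P}^{d-1}$ via the semi-projective cocycle, takes $F_{A_k}$-invariant measures $m_k$ concentrated on $F^{i,A_k}$ realizing $\gamma_j(A_k)=\int\varphi_{A_k}\,dm_k$, passes to a weak-$*$ limit $m$, shows $m(\operatorname{Ker}A)=0$ and $m(E^{i,A})=1$, and reads off $\int\varphi_A\,dm=\lambda_i(A)$. The case $\lambda_l(A)=-\infty$ is handled separately by a contradiction argument. The key advantage is that the kernel obstruction is isolated in the single statement $m(\operatorname{Ker}A)=0$, proved once (Lemma~4.1/Remark~4.2), rather than hidden in an unavailable lower domination.

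\textbf{Lyapunov $\Rightarrow$ Oseledets.} Your flag-bundle strategy with a global rigidity step is conceptually sound but considerably heavier than what the paper does. The paper never proves rigidity for measures on $\prod_i\operatorname{Grass}(d_i,d)$. Instead it (a) treats the one-dimensional case $\gamma_1>\gamma_2$ on $\mathbb{P}^{d-1}$, where the needed rigidity is just the elementary decomposition $m=am^{u_1}+bm^{s_1}$ of Proposition~3.1 together with $\int\varphi_A\,dm=\gamma_1$ forcing $b=0$; (b) reduces convergence of each fast space $E^{u_i}$ to this case by passing to $\Lambda^{\tilde d_i}A$; (c) obtains the slow spaces $E^{s_i}$ from the adjoint cocycle via $E^{s_i,A}_x=(E^{u_i,A_*}_x)^\perp$; and (d) recovers $E^{i,A}$ as the intersection $E^{u_i,A}\cap E^{s_{i-1},A}$ using a cone lemma. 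This bypasses both the quotient/induction you sketch and any direct analysis of invariant measures on higher Grassmannians, and it makes the role of the semi-projective cocycle completely explicit rather than a black box to be invoked at the rigidity step.
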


\begin{remark} Observe that requiring $\lim_{k\to \infty}\lambda_i(A_k)=\lambda_i(A)$ for every $i=1,\ldots ,l$ is not enough to guarantee that the Oseledets subspaces of $A_k$ converge to those of $A$ with respect to the measure $\mu$. For example, let $A:M\to \mbox{SL}(2,\real)$ be a discontinuity point for the Lyapunov exponents whose existence is guaranteed, for instance, by \cite{Boc-un, BockerV, But}. Thus, by the upper semi-continuity of the largest Lyapunov exponent there exists a sequence $\{A_k\}_k\subset C^0(M)$ converging to $A$ such that $\lim_{k\to \infty}\lambda_1(A_k)=a <\lambda_1(A)$. Now, considering $\hat{A}_k:M\to GL(6,\real)$ given by 
\begin{displaymath}
\hat{A}_k=\left( \begin{array}{ccccc}
A_k & \begin{matrix} 0\\0\end{matrix} & \begin{matrix} 0\\0\end{matrix}&\begin{matrix} 0\\0\end{matrix}&\begin{matrix} 0\\0\end{matrix}\\
\begin{matrix}0&0\\0&0\\0&0\\0&0 \end{matrix} & \begin{matrix} e^{\lambda_1(A_k)} \\ 0 \\ 0\\ 0 \end{matrix} & \begin{matrix} 0 \\ e^{\lambda_2(A_k)} \\ 0\\ 0 \end{matrix} &\begin{matrix} 0 \\ 0 \\ e^{\lambda_1(A)}\\ 0 \end{matrix}& \begin{matrix} 0 \\ 0 \\ 0\\ e^{\lambda_2(A)} \end{matrix}
\end{array}\right),
\end{displaymath}
it is easy to see that it converges to 
\begin{displaymath}
\hat{A}=\left( \begin{array}{ccccc}
A & \begin{matrix} 0\\0\end{matrix} & \begin{matrix} 0\\0\end{matrix}&\begin{matrix} 0\\0\end{matrix}&\begin{matrix} 0\\0\end{matrix}\\
\begin{matrix}
0&0\\0&0\\0&0\\0&0
\end{matrix} & \begin{matrix} e^{a} \\ 0 \\ 0\\ 0 \end{matrix} & \begin{matrix} 0 \\ e^{-a} \\ 0\\ 0 \end{matrix} &\begin{matrix} 0 \\ 0 \\ e^{\lambda_1(A)}\\ 0 \end{matrix}& \begin{matrix} 0 \\ 0 \\ 0\\ e^{\lambda_2(A)} \end{matrix}
\end{array}\right)
\end{displaymath}
and moreover $\lim_{k\to \infty}\lambda_i(\hat{A}_k)=\lambda_i(\hat{A})$ for every $i$ but the Oseledets subspaces of $\hat{A}_k$ does not converge to those of $\hat{A}$ with respect to the measure $\mu$. Indeed, Oseledets subspaces corresponding to $\lambda_1(\hat{A}_k)=\lambda_1(A)$ are one dimensional while the ones corresponding to $\lambda_1(\hat{A})=\lambda_1(A)$ are two dimensional and hence there is no convergence in measure. In particular, as expected, the condition $\lim_{k\to \infty}\lambda_i(A_k)=\lambda_i(A)$ for every $i=1,\ldots ,l$ is weaker than $\lim_{k\to \infty}\gamma_i(A_k)=\gamma_i(A)$ for every $i=1,\ldots ,d$. 
\end{remark}

As a simple consequence of our main theorem we get
\begin{corollary}\label{teoremaA}
  $A$ is a continuity point for the Lyapunov exponents if and only if it is a continuity point for the Oseledets subspaces with respect to the measure $\mu$.
 \end{corollary}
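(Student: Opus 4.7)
The plan is to prove Theorem~\ref{teo.principal}, from which Corollary~\ref{teoremaA} is then immediate by quantifying over all sequences $A_k\to A$. I would handle the two implications separately and reduce the harder one, via exterior powers, to the setting of a \emph{simple} dominant Lyapunov exponent.

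For the easier direction, assume the Oseledets subspaces of $A_k$ converge in measure to those of $A$. Since $F^{i,A_k}_x$ has constant dimension $d_i(A)$ and is $A_k$-invariant, Kingman's subadditive ergodic theorem gives
\[
\sum_{j=\tilde d_{i-1}+1}^{\tilde d_i}\gamma_j(A_k)\;=\;\int_M\log\bigl|\det\bigl(A_k(x)|_{F^{i,A_k}_x}\bigr)\bigr|\,d\mu(x),
\]
and similarly for $A$. The integrand is a continuous function of $(A_k(x),F^{i,A_k}_x)$ wherever it is finite, is uniformly dominated from above by $d_i(A)\log\|A_k\|_\infty$, and only its negative part requires a separate integrable bound (harmless when $\lambda_i(A)>-\infty$ and handled by truncation when $\lambda_i(A)=-\infty$). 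Uniform convergence of $A_k$ plus convergence in measure of $F^{i,A_k}_x$ therefore give convergence of each block sum to $d_i(A)\lambda_i(A)$. Monotonicity of $j\mapsto\gamma_j(A_k)$ together with the correct ordering of the blocks then forces each individual $\gamma_j(A_k)$ to converge to $\lambda_i(A)$ within its block.

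For the forward implication, for each $i\in\{1,\dots,l-1\}$ set $p_i=\tilde d_i$ and pass to the exterior power cocycle $\Lambda^{p_i}A$ on $\exteriorj$ with $j=p_i$. Its top Lyapunov exponent equals $\sum_{j=1}^{p_i}\gamma_j(A)$, it is \emph{simple}, and the spectral gap to the next exponent is exactly $\lambda_i(A)-\lambda_{i+1}(A)>0$. Continuity of the full Lyapunov spectrum of $A_k$ transfers to continuity of the full Lyapunov spectrum of $\Lambda^{p_i}A_k$, so for $k$ large this exterior power also has a simple top exponent with a uniform gap. I would then prove a general reduction lemma: if $B_k\to B$ uniformly, if $B$ has a simple top Lyapunov exponent, and if the top exponents of $B_k$ converge to that of $B$ with a uniform gap, then the top Oseledets direction of $B_k$ converges in measure to that of $B$. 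Applied to $B_k=\Lambda^{p_i}A_k$ and decoded through the Plücker embedding, this yields convergence in measure of the forward filtration $E^{1,A_k}_x\oplus\cdots\oplus E^{i,A_k}_x$ to its counterpart for $A$. Applying the same reasoning to the transposed/backward cocycle (where invertibility of $f$ allows iteration on the subspaces where $A$ is non-degenerate) produces the complementary decreasing filtration, and intersecting the two recovers each individual $E^{i,A}_x$ as required.

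The main obstacle is precisely this reduction lemma. Because the matrices $A(x)$ may have non-trivial kernel, the natural action on $\P^{d-1}$ is only densely defined and one cannot straightforwardly push measures forward; this is where the ``semi-projective cocycle'' framework announced in the introduction intervenes. Assuming that framework in place, the core idea is to lift $\mu$ to the projective bundle via the graphs of the top Oseledets lines of $B_k$, pass to a weak-$*$ subsequential limit, and show that the resulting $B$-invariant probability has a fiberwise disintegration concentrated on the top Oseledets line of $B$: otherwise a Furstenberg-type integral representation for the top Lyapunov exponent, combined with the spectral gap, would produce a limit of top Lyapunov exponents strictly below that of $B$, contradicting continuity. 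Ergodicity of $\mu$ and strictness of the gap $\lambda_i(A)-\lambda_{i+1}(A)>0$ are both essential for this last step.
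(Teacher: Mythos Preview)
Your forward implication (continuity of Lyapunov exponents $\Rightarrow$ continuity of Oseledets subspaces) is essentially the paper's argument: reduce via exterior powers to a simple top exponent, prove a ``reduction lemma'' by pushing the graph measures of the top Oseledets line to the projective bundle and showing that any weak-$*$ limit must sit on $E^{1,A}$ (else the Furstenberg integral would be strictly below $\gamma_1(A)$), then recover the slow flag from the adjoint cocycle and intersect. This is exactly Propositions~\ref{mayor.exponente} and~\ref{prop: continuity i fastest Osel}, Corollary~\ref{cor: weakest Oseledets}, and Lemma~\ref{intersection}.

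Your reverse implication, however, departs from the paper and has a genuine gap. You claim that convergence of the block sums
\[
\sum_{j=\tilde d_{i-1}+1}^{\tilde d_i}\gamma_j(A_k)\;\longrightarrow\;d_i(A)\,\lambda_i(A),
\]
together with ``monotonicity of $j\mapsto\gamma_j(A_k)$ and the correct ordering of the blocks'', forces each individual $\gamma_j(A_k)$ in block $i$ to converge to $\lambda_i(A)$. This is false as stated: for a block of size two with target $\lambda_i=1$ one could have $(\gamma_{\tilde d_{i-1}+1}(A_k),\gamma_{\tilde d_i}(A_k))=(1.5,0.5)$, with the required strict gaps to the neighbouring blocks, and the block sum still equal to $2$. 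What actually pins the individual exponents down is the upper semicontinuity of \emph{all} partial sums $S_p(B)=\sum_{j\le p}\gamma_j(B)$, which you never invoke. Even granting that, you still need $\liminf_k S_{\tilde d_i}(A_k)\ge S_{\tilde d_i}(A)$, and your determinant integral only delivers the opposite inequality: from $\log|\det(A_k|_{F^{i,A_k}_x})|\to\log|\det(A|_{E^{i,A}_x})|$ in measure with a uniform upper bound, Fatou gives $\limsup$ of the integral $\le$ the limit integral, not $\liminf\ge$. The missing ingredient is uniform integrability of the negative part, and this is \emph{not} ``harmless when $\lambda_i(A)>-\infty$'': on the small-measure set where $F^{i,A_k}_x$ has not yet converged, nothing prevents $A_k(x)|_{F^{i,A_k}_x}$ from being nearly singular, producing a contribution of order $-1$ to the integral that does not vanish (take $\log|\det|\approx -k$ on a set of measure $1/k$).

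The paper avoids this difficulty by running the \emph{same} semi-projective machinery in both directions: for each $j$ in block $i$ it picks an $F_{A_k}$-invariant measure $m_k$ on $M\times\Pd$ supported in $F^{i,A_k}$ with $\int\varphi_{A_k}\,dm_k=\gamma_j(A_k)$ (Proposition~\ref{lyapunov.measures}, Remark~\ref{measure.realize.lyapunov}), shows that any weak-$*$ limit $m$ satisfies $m(\ker A)=0$ and is $F_A$-invariant (Lemma~\ref{ker.zero} and Remark~\ref{remark.ker.zero}), and then uses the convergence in measure of $F^{i,A_k}_x$ to $E^{i,A}_x$ to force $m$ to live on $E^{i,A}$, whence $\int\varphi_A\,dm=\lambda_i(A)$. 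The point is that $\varphi_A$ is bounded above and the limit measure avoids its $-\infty$ locus, so the passage to the limit in $\int\varphi_{A_k}\,dm_k$ is controlled without any uniform-integrability hypothesis on determinants. Your determinant approach could likely be repaired, but not with the one-line justification you give.
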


It is worth noticing that the proof presented bellow also works with obvious adjustments if we allow the base dynamics $f$ to vary. More precisely, if we consider a sequence of ergodic $\mu$-measure preserving maps $f_k:M\to M$ converging uniformly to $f:M\to M$ and a sequence $\{A_k\}_k \subset C^0(M)$ converging to $A\in C^0(M)$, then a similar statement to the one of Theorem \ref{teo.principal} also works for Lyapunov exponents and Oseledets subspaces of $(A_k,f_k)$ and $(A,f)$. We write the proof in the case when the base dynamics is fixed just to avoid unnecessary notational complications. 

We also observe that our results can be extended to a continuous-time version. Indeed, let $\phi^t:M\mapsto M$, $t\in \real$, be a continuous flow and $A^t:M\rightarrow M(d, \mathbb{R})$, $t\geq 0$, be such that $A^{t+s}(x)=A^t(\phi^s(x))\circ A^s(x)$. This defines a continuous-time semi-invertible cocycle $(\phi^t,A^t)$. Taking $f=\phi^1$ and $A=A^1$, the cocycle $(f,A)$ is a semi-invertible linear cocycle. Moreover, it has the same Lyapunov exponents and Oseledets decomposition as its continuous version. Thus, our results extends directly to this setting.

\section{Preliminary Results}

This section is devoted to present some preliminary results that are going to be used in the proof of Theorem \ref{teo.principal}. We retain all the notation introduced at the previous section.

\subsection{Semi-projective cocycles}
Let $\Pd$ denote the real $(d-1)$-dimensional projective space, that is, the space of all one-dimensional subspaces of $\real ^d$. Given a continuous map $A: M\to M(d,\mathbb{R})$, we want to define an action on $\Pd$ which is, in some sense, induced by $A$. If $(x,[v])\in M\times \Pd$ is such that $A(x)v\neq 0$ then we have a natural action induced by $A$ on $\Pd$ which is just given by $A(x)\left[v\right]=\left[A(x)v\right]$. The difficulty appears when $A(x)v=0$ for some $v\neq 0$. To bypass this issue, let us consider the closed set given by
\begin{displaymath}
\ker(A)=\lbrace (x,[v])\in M\times \Pd ; \;A(x)v=0 \rbrace .
\end{displaymath}
If $\mu(\pi(\ker(A)))=0$ where $\pi :M\times \Pd \to M$ denotes the canonical projection on the first coordinate, then $A(x)$ is invertible for $\mu$-almost every $x\in M$ and hence it naturally induces a map on $\Pd$ which is defined $\mu$-almost everywhere and is all we need. Otherwise, if $\mu(\pi(\ker(A)))>0$ let us consider the set
\begin{displaymath}
K(A)=\lbrace (x,[v])\in M\times \Pd ; \; A^n(x)v=0\mbox{ for some }n>0 \rbrace.
\end{displaymath}
Observe that $K(A)\cap\lbrace x \rbrace \times \Pd \subset \lbrace x \rbrace \times E^{l,A}_x$ for every \textit{regular} point $x\in M$.

Since $\pi(K(A))$ is an $f$-invariant set and $\mu$ is ergodic it follows that $\mu(\pi(K(A)))=1$. Thus, we can define a mensurable section $\sigma:M\to \Pd$ such that $(x,\sigma(x))\in K(A)$. Moreover, we can do this in a way such that if $x\in \pi(\ker(A))$ then $(x,\sigma(x))\in \ker(A)$. Fix such a section. We now define the \textit{semi-projective cocycle} associated to $A$ and $f$ as being the map $F_{A}:M\times \Pd\to M\times \Pd$ given by 
\begin{displaymath}
F_{A} (x,\left[v\right])=\left\lbrace\begin{array}{c} 
                         (f(x),\left[ A(x)v\right]) \mbox{ if }A(x)v\neq 0\\
                         (f(x),\sigma(f(x)) \mbox{ if }A(x)v= 0.\\
                        \end{array}
                        \right.
\end{displaymath}
This is a measurable function which coincides with the usual projective cocycle outside $\ker(A)$. In particular, it is continuous outside $\ker(A)$. From now on, given a non-zero element $v\in \mathbb{R}^{d}$ we are going to use the same notation to denote its equivalence class in $\Pd$. 

Given a measure $m$ on $M\times \Pd$, observe that if $m(\ker(A))=0$ then ${F_A}_{\ast}m$ does not depend on the way the section $\sigma$ was chosen. Indeed, if $\psi:M\times \Pd\to \real$ is a mensurable function then
\begin{displaymath}
 \int_{M\times \Pd}\psi\circ F_A dm= \int_{M\times \Pd \setminus \ker(A)}\psi\circ F_A dm.
\end{displaymath}

In the sequel, we will be primarily interested in $F_A$-invariant measures on $M\times \Pd$ that \textit{projects} on $\mu$, that is, $\pi _{\ast}m=\mu$ and such that $m(\ker(A))=0$. Our first result states if the cocycle $A$ has two different Lyapunov exponents then any such a measure may be written as a convex combination of measures concentrated on a suitable combination of the Oseledets subspaces. An useful notation that we are going to use through the paper is the following: 
$$E_x^{s_i,A}=E^{i+1,A}_x\oplus \cdots \oplus E^{l,A}_x$$
and
$$E_x^{u_i,A}=E^{1,A}_x\oplus \cdots \oplus E^{i,A}_x$$
which denotes, respectively, the Oseledets slow and fast subspaces of `order i' associated to $A$ and
$$E^{i,A}=\{(x,v)\in M\times \Pd ; \; v\in E^{i,A}_x\}.$$

\begin{proposition}\label{decomposition}
If $\gamma_i(A)>\gamma_{i+1}(A)$ then every $F_A$-invariant measure projecting to $\mu$ and such that $m(\ker(A))=0$ is of the form $m=a m^{u_i}+b m^{s_i}$ for some $a,b\in [0,1]$ such that $a+b=1$, where $m^{\ast}$ is an $F_A$-invariant measure projecting on $\mu$ such that its disintegration $\{m^{\ast}_x\}_{x\in M}$ with respect to $\mu$ satisfies $m^{\ast}_x(E^{\ast}_x)=1$ for $\ast\in\lbrace s_i,u_i \rbrace$. 
\end{proposition}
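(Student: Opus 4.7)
The plan is to split $m$ by the asymptotic growth rate $\lim_n\tfrac{1}{n}\log\norm{A^n(x)v}$ and show the two pieces concentrate on the fast and slow bundles. Set $u=\gamma_i(A)$ and $s=\gamma_{i+1}(A)$, so $u>s$, and let
\begin{displaymath}
S=\{(x,[v])\in M\times\Pd : v\in E^{s_i,A}_x\},\qquad U=(M\times\Pd)\setminus S.
\end{displaymath}
Since $v\in E^{s_i,A}_x$ iff $\limsup_n\tfrac{1}{n}\log\norm{A^n(x)v}\leq s$, and since $A(x)E^{s_i,A}_x\subseteq E^{s_i,A}_{f(x)}$, both $S$ and $U$ are $F_A$-invariant modulo $\ker(A)$; hence $m|_S$ and $m|_U$ are $F_A$-invariant. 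The disintegration identity $(A(x))_\ast m_x = m_{f(x)}$ combined with ergodicity of $\mu$ forces the fiber masses $(m|_U)_x(\{[v]:v\notin E^{s_i,A}_x\})=a$ and $(m|_S)_x(\{[v]:v\in E^{s_i,A}_x\})=b$ to be constant, with $a+b=1$, so $m|_S$ is already of the desired form $b\,m^{s_i}$.

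What remains is showing that $m|_U$ concentrates not merely on $U$ but on the smaller set $E^{u_i,A}$. First I would establish the pointwise input: for $\mu$-almost every regular $x$ and every $v\notin E^{s_i,A}_x$, decompose $v=v_u+v_s$ with $v_u\in E^{u_i,A}_x\setminus\{0\}$ and $v_s\in E^{s_i,A}_x$. By Oseledets regularity, $\norm{A^n(x)v_s}/\norm{A^n(x)v_u}$ decays exponentially, while $\measuredangle(E^{u_i,A}_{f^n(x)},E^{s_i,A}_{f^n(x)})$ decays at most subexponentially, so
\begin{displaymath}
\d\!\left([A^n(x)v],\, E^{u_i,A}_{f^n(x)}\right)\longrightarrow 0\qquad\text{as }n\to\infty.
\end{displaymath}
To turn this into a statement about $m|_U$, fix $\eta>0$ and set $G_\eta=\{(y,[w]):\d([w],E^{u_i,A}_y)\geq\eta\}$. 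The convergence above gives $\mathbf{1}_{G_\eta}\circ F_A^n\to 0$ for $m|_U$-a.e. point, so by dominated convergence and the $F_A$-invariance of $m|_U$,
\begin{displaymath}
(m|_U)(G_\eta)=\int\mathbf{1}_{G_\eta}\circ F_A^n\,d(m|_U)\longrightarrow 0,
\end{displaymath}
forcing $(m|_U)(G_\eta)=0$ for every $\eta>0$. Hence $(m|_U)_x(E^{u_i,A}_x)=a$ for $\mu$-a.e. $x$, and normalizing yields $m^{u_i}:=a^{-1}m|_U$ and $m^{s_i}:=b^{-1}m|_S$ (with the obvious convention if $a$ or $b$ vanishes), so $m=am^{u_i}+bm^{s_i}$ is the desired decomposition.

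The hard part will be the subexponential angle estimate in the semi-invertible setting: I need to justify that the Oseledets regularity obtained in \cite{FLQ10} yields $\tfrac{1}{n}\log\sin\measuredangle(E^{u_i,A}_{f^n(x)},E^{s_i,A}_{f^n(x)})\to 0$ along forward orbits, which is what allows the exponential ratio decay to overwhelm the degeneration of angles. Once that is in hand, the restriction-and-push-forward bookkeeping (ensuring $\ker(A)$ stays negligible under iteration, constancy of fiber masses, measurability of $G_\eta$, etc.) is routine.
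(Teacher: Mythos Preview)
Your argument is correct and follows essentially the same strategy as the paper: both proofs use the Oseledets input that for $v\notin E^{s_i,A}_x$ the iterate $[A^n(x)v]$ is attracted to $E^{u_i,A}_{f^n(x)}$ exponentially while $\measuredangle(E^{u_i,A}_{f^n(x)},E^{s_i,A}_{f^n(x)})$ degenerates only subexponentially, and then combine this with ergodicity to get constant fiber masses. The only cosmetic difference is the mechanism converting this pointwise attraction into a measure statement: the paper applies Poincar\'e recurrence to the sets $B_j=\{(x,v):|\sin\measuredangle(v,E^{\ast}_x)|\geq j^{-1}|\sin\measuredangle(E^{u_i}_x,E^{s_i}_x)|\}$, whereas you use $F_A$-invariance of $m|_U$ together with dominated convergence on the sets $G_\eta$ --- both are equally valid and rely on the same subexponential angle estimate you flag at the end.
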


\begin{proof}
Given $j\in \mathbb{N}$ let us consider the set
 $$B_j=\left\lbrace (x,v)\in M\times \Pd; \;\lvert \sin \measuredangle(v,E^{\ast}_x)\rvert \geq \frac{1}{j}\lvert \sin \measuredangle(E^{u_i}_x,E^{s_i}_x)\rvert\mbox{ for }\ast=s_i,u_i\right\rbrace.$$ 
 
Since $\gamma_i(A)>\gamma_{i+1}(A)$ it follows that for any $(x,v)\in B_j$, the angle between $A^n(x)v$ and $E^{u_i}_{f^n(x)}$ decays exponentially fast when $n$ goes to $+\infty$.
Therefore, since by Oseledets' theorem the angle $\measuredangle(E^{u_i}_x,E^{s_i}_x)$ decays sub exponentially it follows that every $(x,v)\in B_j$ leaves $B_j$. Consequently, by Poincar\'e's recurrence theorem $m(B_j)=0$ for every $j\in \mathbb{N}$. Hence, the measure $m$ is concentrated on $\lbrace (x,E^{u_i}_x); \; x\in M\} \cup \{(x,E^{s_i}_x); \; x\in M\rbrace$. Let $\{m_x\}_{x\in M}$ be a disintegration of $m$ with respect to $\mu$. It follows then by the previous observations that $m_x(E^{s_i}_x)+m_x(E^{u_i}_x)=1$ for $\mu$-almost every $x\in M$. Thus, letting $m^{\ast}_x$ be the normalized restriction of $m_x$ to $E^{\ast}_x$ for $\ast \in \{s_i,u_i\}$ we get that $m_x=a(x)m^{u_i}_x+b(x) m^{s_i}_x$ where $a(x)=m_x(E^{u_i}_x)$ and $b(x)=m_x(E^{s_i}_x)$. To conclude the proof, since our measure $\mu$ is ergodic, it only remains to observe that both $a$ and $b$ are invariant functions and consequently constant functions. This follows easily from the invariance of the Oseledets spaces and the fact that,
 since $m$ is $F_A$-invariant, $m_{f(x)}
= A(x)_{\ast}m_x$ for $\mu$-almost every $x\in M$. Indeed,
\begin{equation*}
\begin{split}
 a(f(x)) & = m_{f(x)}(E^{u_i}_{f(x)})= A(x)_{\ast}m_x(E^{u_i}_{f(x)})\\
  &= m_x(E^{u_i}_{x})=a(x)
	\end{split}
\end{equation*}
as we want.
 \end{proof}
 
 Our next result gives the existence of $F_A$-invariant measures concentrated on Oseledets subspaces. This is going to be used in Section \ref{sec: cont Osel implies cont Lyap}.

 \begin{proposition}\label{lyapunov.measures}
  For every $1\leq j < l$, there exists an $F_A$-invariant measure $m$ projecting to $\mu$ and concentrated on $E^{j,A}=\lbrace (x,v) \in M\times \Pd ; \; v\in E^{j,A}_x\rbrace$. In particular, it satisfies $m(\ker(A))=0$.
 \end{proposition}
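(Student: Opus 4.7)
The plan is to construct $m$ by a Krylov--Bogolyubov procedure initialized on the subbundle $E^{j,A}$. Since $j<l$ and the Lyapunov spectrum is strictly decreasing, one has $\lambda_j \geq \lambda_{l-1} > -\infty$, so $A(x)$ restricts to a linear isomorphism $E^{j,A}_x \to E^{j,A}_{f(x)}$ at every Oseledets regular point. In particular, $E^{j,A}$ is $F_A$-forward-invariant and disjoint from $\ker(A)$ on a $\mu$-conull set. Picking a measurable unit vector $e(x) \in E^{j,A}_x$ (which is possible since the Grassmannian-valued map $x\mapsto E^{j,A}_x$ is measurable), I would set $\nu_0 := \int_M \delta_{(x,[e(x)])}\, d\mu(x)$ and form the Ces\`aro averages $\nu_n := \tfrac{1}{n}\sum_{k=0}^{n-1} (F_A^k)_\ast \nu_0$. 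Each $\nu_n$ projects to $\mu$, is concentrated on $E^{j,A}$, and satisfies $\nu_n(\ker(A))=0$.

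Next, by compactness of $M\times \Pd$, I would extract a weak-$\ast$ subsequential limit $m$ of $(\nu_n)$. The subtle point is that $E^{j,A}$ is only measurable, not closed, so weak-$\ast$ convergence does not automatically preserve concentration on it. To handle this, I would apply Lusin's theorem to the map $x \mapsto E^{j,A}_x$: for every $\epsilon>0$ there exists a compact $K_\epsilon \subset M$ with $\mu(K_\epsilon) > 1-\epsilon$ on which this map is continuous. The set $E^{j,A}|_{K_\epsilon} := \{(x,[v]) : x\in K_\epsilon,\, v \in E^{j,A}_x\}$ is then closed in $M \times \Pd$, every $\nu_n$ assigns it mass exactly $\mu(K_\epsilon)$, and the portmanteau inequality yields $m(E^{j,A}|_{K_\epsilon}) \geq 1-\epsilon$. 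Letting $\epsilon \to 0$ gives $m(E^{j,A}) = 1$, and in particular $m(\ker(A))=0$, since $E^{j,A}$ misses $\ker(A)$ on regular points.

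Finally, I would upgrade $m$ to an honest $F_A$-invariant measure. For every continuous $\phi\colon M\times \Pd \to \real$ the set of discontinuities of $\phi\circ F_A$ is contained in the closed set $\ker(A)$, which is $m$-null; the generalized portmanteau theorem for bounded functions with $m$-negligible discontinuity set then gives $\int \phi\circ F_A\, d\nu_{n_i} \to \int \phi\circ F_A\, dm$. On the other hand, the telescoping identity $\int \phi\circ F_A\, d\nu_n - \int \phi\, d\nu_n = \tfrac{1}{n}\bigl(\int \phi\circ F_A^n\, d\nu_0 - \int \phi\, d\nu_0\bigr) \to 0$ forces the two limits to coincide, so $m$ is $F_A$-invariant. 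The main obstacle throughout is the combination of two features: $E^{j,A}$ is measurable but not closed, and $F_A$ is discontinuous on $\ker(A)$. Both are circumvented uniformly by the Lusin-exhaustion idea together with the fact that the approximating measures $\nu_n$ are supported off $\ker(A)$ by construction.
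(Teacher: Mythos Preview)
Your argument is correct and follows essentially the same strategy as the paper: a Krylov--Bogolyubov construction inside the subbundle $E^{j,A}$, with Lusin's theorem supplying compact sets on which $x\mapsto E^{j,A}_x$ is continuous so that concentration on $E^{j,A}$ survives the weak-$\ast$ limit. The only cosmetic difference is how the discontinuity of $F_A$ on $\ker(A)$ is handled when proving invariance of the limit: the paper uses Tietze extension to replace $\psi\circ F_A$ by a genuinely continuous function agreeing with it on $E^{j,A}|_K$, whereas you invoke the portmanteau theorem for bounded functions whose discontinuity set is $m$-null; both devices are standard and interchangeable here.
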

 
\begin{proof}
 Let $\mathcal{M}_j$ be the space of all probability measures on $M\times \Pd$ such that $m(E^{j,A})=1$ and $\pi_{\ast}m=\mu$. In particular, $m(\ker(A))=0$ for every $m\in \mathcal{M}_j$. 
 
 Let us consider now the map ${F_A}_{\ast}:\mathcal{M}_j\to \mathcal{M}_j$ given by ${F_A}_{\ast}m$. From the invariance of $E^{j,A}$ and the definition of $\mathcal{M}_j$ it follows that ${F_A}_{\ast}$ is well defined and moreover does not depend on the choice of the section $\sigma$ in the definition of the semi-projective cocycle. Furthermore, it is continuous. Indeed, let $\{m_k\}_k \subset \mathcal{M}_j$ be a sequence converging to $m$ in the weak$^{\ast}$ topology and $\psi:M\times \Pd\to \real$ a continuous map. By Lusin's Theorem, given $\epsilon>0$ there exist a compact set $K\subset M$ such that $\mu(M\setminus K)<\frac{\epsilon}{4\norm{\psi}}$ and $x\to E^{j,A}_x$ is continuous when restricted to $K$. Now, since $\ker(A)\cap E^{j,A}=\emptyset$ and $\psi\circ F_A$ is continuous outside $\ker(A)$, it follows from Tietze extension theorem that there exist a continuous function $\hat{\psi}:M\times \Pd\to \real$ satisfying $\hat{\psi}(p)=\psi\circ F_A(p)$ for every $p\in \lbrace (x,v)\in K \times \Pd ; 
\; v\in E^{j,A}_x\rbrace$ and $\norm{\hat{\psi}}\leq \norm{\psi}$. Then,
  \begin{displaymath}
  \left| \int \psi\circ F_A dm_k-\int \psi \circ F_A dm\right| \leq \left| \int \hat{\psi}dm_k-\int \hat{\psi}dm \right| +\epsilon.
 \end{displaymath}
Consequently, taking $k$ sufficiently large, $\lvert \int \psi\circ F_A dm_k-\int \psi \circ F_A dm\rvert<2\epsilon$ as we claimed.

We observe now that $\mathcal{M}_j$ is a closed subset of the set of all probability measures of $M\times \Pd$. In fact, let $\{m_k\}_k \subset \mathcal{M}_j$ be a sequence converging to $m$. As before, given $\epsilon >0$ there exists a compact set $K\subset M$  such that $\mu(M\setminus K)<\epsilon$ and $x\to E^{j,A}_x$ is continuous when restricted to $K$. Thus, since $E^{j,A}_K:=\lbrace (x,v) \in K\times \Pd ; \; v\in E^{j,A}_x\rbrace$ is a closed subset of $M\times \Pd$, it follows that
\begin{displaymath}
 m(E^{j,A})\geq m(E^{j,A}_K)\geq \limsup_{k\to \infty}m_k(E^{j,A}_K).
\end{displaymath}
Therefore, as $m_k(E^{j,A})=1$ and $\mu(M\setminus K)<\epsilon$ we get that $m_k(E^{j,A}_K)>1-\epsilon$ for every $k$ and consequently, since $\epsilon >0$ was arbitrary and $m$ is a probability measure, $m(E^{j,A})=1$ and $\mathcal{M}_j$ is closed.

To conclude the proof, it only remains to observe that given any $m\in \mathcal{M}_j$, every accumulation point of $\frac{1}{n}\sum_{k=0}^{n-1}{F^k_A}_{\ast}m$ gives rise to an $F_A$-invariant measure concentrated on $E^{j,A}$. This follows easily from the previous observations.
  \end{proof}

\begin{remark}\label{measure.realize.lyapunov}
 Letting $\varphi _{A}: M\times \Pd \to \real$ be the map given by
\begin{displaymath}
\varphi _{A}(x,v)=\log \frac{\parallel A(x)v\parallel }{\parallel v \parallel},
\end{displaymath}
it follows easily from the definition and Birkhoff's ergodic theorem that, for every $F_A$-invariant probability measure $m$ concentrated on $E^{j,A}$ and projecting to $\mu$, 
\begin{equation*}
\lambda_j(A)=\int _{M\times \Pd} \varphi_{A}(x,v)dm.
\end{equation*}
\end{remark}

\subsection{The adjoint cocycle} Given $x\in M$, let $A_{\ast}(x):(\real ^d)^{\ast} \to (\real ^d)^{\ast}$ be the adjoint operator of $A(f^{-1}(x))$ defined by 
\begin{equation}\label{eq: definition adjoint}
(A_{\ast}(x)u)v = u(A(f^{-1}(x))v) \; \mbox{for each} \; u\in (\real ^d)^{\ast} \; \mbox{and} \; v\in \real ^d.
\end{equation}
Fixing some inner product $\langle \; ,\; \rangle$ on $\real ^d$ and identifying the dual space $(\real ^d)^{\ast}$ with $\real ^d$ we get the map $A_{\ast}:M\to M(d,\mathbb{R})$ and equation \eqref{eq: definition adjoint} becomes
\begin{displaymath}
\langle A(f^{-1}(x))u,v\rangle = \langle u,A_{\ast}(x)v\rangle \; \mbox{for every}\; u,v\in \real ^d.
\end{displaymath}
The \textit{adjoint cocycle} of $A$ is then defined as the cocycle generated by the map $A_{\ast}:M\to M(d,\mathbb{R})$ over $f^{-1}:M\to M$.

An useful remark is that the Lyapunov exponents counted with multiplicities of the adjoint cocycle are the same as those of the original cocycle. This follows from the fact that a matrix $B$ and its transpose $B^T$ have the same singular values combined with Kingman's sub-additive theorem. Moreover, Oseledets subspaces of the adjoint cocycle are strongly related with the ones of the original cocycle. More precisely,

\begin{lemma}\label{adjoint.spaces}
$E^{s_i,A}_x=(E^{u_i,A_{\ast}}_x)^{\bot}$ where the right-hand side denotes the orthogonal complement of the space $E^{u_i,A_{\ast}}_x$. 
\end{lemma}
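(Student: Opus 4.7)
The plan is to prove the inclusion $E^{s_i,A}_x\subseteq (E^{u_i,A_*}_x)^{\perp}$ and then close the argument by a dimension count. Since $A$ and $A_*$ share the same Lyapunov exponents counted with multiplicities, $\dim E^{u_i,A_*}_x=\dim E^{u_i,A}_x$, and hence $\dim (E^{u_i,A_*}_x)^{\perp}=\dim E^{s_i,A}_x$, so only the inclusion requires work.

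For the inclusion, I would fix $v\in E^{s_i,A}_x$ and $w\in E^{u_i,A_*}_x$ and show $\langle v,w\rangle=0$ by passing to the limit in an identity valid for all $n$. Iterating \eqref{eq: definition adjoint} yields $A_*^n(f^n(x))=(A^n(x))^T$, which delivers the pairing
\[
\langle A^n(x)v,\,z\rangle \;=\; \langle v,\,A_*^n(f^n(x))\,z\rangle \qquad \text{for all } z\in\real^d.
\]
The clever choice of $z$ is $z=z_n\in E^{u_i,A_*}_{f^n(x)}$ determined by $A_*^n(f^n(x))z_n=w$. Such a $z_n$ exists and lies uniquely inside the fast subspace because, for $1\leq i<l$ (the only nontrivial case), $\lambda_i(A_*)=\lambda_i(A)>-\infty$, so the Oseledets invariance for the adjoint is the equality $A_*^n(f^n(x))E^{u_i,A_*}_{f^n(x)}=E^{u_i,A_*}_x$. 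With this choice the pairing identity reduces to $\langle v,w\rangle=\langle A^n(x)v,z_n\rangle$ for every $n\geq 1$.

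It remains to make the right-hand side tend to $0$. Since $v$ lies in the slow space $E^{s_i,A}_x$, on which the top Lyapunov exponent of $A$ is $\lambda_{i+1}$, Oseledets at $x$ gives $\|A^n(x)v\|\leq C_\epsilon\|v\|\,e^{n(\lambda_{i+1}+\epsilon)}$. To bound $\|z_n\|$ I would introduce the cocycle $T^n(x):=(A_*^n(f^n(x))|_{E^{u_i,A_*}_{f^n(x)}})^{-1}$, which is a genuine linear cocycle over $f$ on the invariant sub-bundle $\{E^{u_i,A_*}_y\}_y$. Its Lyapunov exponents are the negatives of those of the restriction $A_*|_{E^{u_i,A_*}}$, namely $-\lambda_1(A),\ldots,-\lambda_i(A)$, so its top exponent is $-\lambda_i(A)$; Oseledets at $x$ applied to $T$ yields $\|z_n\|=\|T^n(x)w\|\leq C'_\epsilon\|w\|\,e^{n(-\lambda_i+\epsilon)}$. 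Multiplying,
\[
|\langle v,w\rangle|\;\leq\; C\,\|v\|\,\|w\|\,e^{n(\lambda_{i+1}-\lambda_i+2\epsilon)},
\]
which tends to $0$ as soon as $\epsilon<(\lambda_i-\lambda_{i+1})/2$, giving $\langle v,w\rangle=0$.

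The step I expect to be the main obstacle is the construction of the inverse sub-cocycle $T$ together with the identification of its Lyapunov exponents as $-\lambda_1(A),\ldots,-\lambda_i(A)$. This relies on $A_*$ restricting to a truly bijective sub-cocycle on the fast-subspace bundle $\{E^{u_i,A_*}_y\}_y$, which does hold in the semi-invertible setting precisely because $\lambda_i(A_*)=\lambda_i(A)>-\infty$ delivers the equality case of Oseledets invariance and rules out any loss of rank; once this is granted, the remainder of the argument is bookkeeping.
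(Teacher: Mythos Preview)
Your argument is correct and follows essentially the same route as the paper. The only cosmetic difference is which orthogonality is proved first: the paper shows $E^{u_i,A}_x\perp E^{s_i,A_*}_x$ by pulling back a vector $v\in E^{u_i,A}_x$ through the surjective maps $A^n(f^{-n}(x))|_{E^{u_i,A}_{f^{-n}(x)}}$ and comparing growth rates in the pairing $\langle A^n(f^{-n}(x))v_n,u\rangle=\langle v_n,A_*^n(x)u\rangle$, and then invokes $(A_*)_*=A$ to obtain the stated identity; you prove the stated orthogonality $E^{s_i,A}_x\perp E^{u_i,A_*}_x$ directly by pulling back through $A_*^n(f^n(x))|_{E^{u_i,A_*}_{f^n(x)}}$. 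Your explicit inverse sub-cocycle $T$ is exactly the mechanism behind the paper's terse assertion that the relevant inner product ``grows at an exponential rate at least $\lambda_i(A)$''; the justification you flag as the main obstacle (bijectivity and the identification of the exponents of $T$ as $-\lambda_1,\ldots,-\lambda_i$) is the same point the paper is using, and it is part of the semi-invertible Oseledets theorem once $\lambda_i>-\infty$.
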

\begin{proof}
By contradiction, suppose there exist $v\in E^{u_i,A}_x$ and $u\in E^{s_i,A_{\ast}}_x$ such that $\langle v,u\rangle\neq 0$. We may assume $i<l$ otherwise the lemma trivially holds. In this case, for each $n\in \mathbb{N}$ the map $A^n(f^{-n}(x)):E^{u_i,A}_ {f^{-n}(x)} \to E^{u_i,A}_x$ is surjective and thus we may find unitary vectors $v_n\in E^{u_i,A}_ {f^{-n}(x)}$ such that $A^n(f^{-n}(x))v_n$ are multiples of $v$. By definition,
\begin{displaymath}
\begin{split}
\langle A^n(f^{-n}(x))v_n,u\rangle &= \langle v_n, (A^n(f^{-n}(x)))^{\ast}u\rangle \\
&=\langle v_n, A_{\ast}^n(x)u\rangle.
\end{split}
\end{displaymath}
Now, since $\langle v_n, A_{\ast}^n(x)u\rangle$ grows at an exponential rate smaller than $\lambda _i(A)$ while $\langle A^n(f^{-n}(x))v_n,u\rangle$ grows at an exponential rate at least $\lambda _i(A)$ we get a contradiction. Therefore, $E^{u_i,A}_x \subset (E^{s_i,A_{\ast}}_x)^{\bot}$. Now, since they have the same dimension we get $E^{u_i,A}_x = (E^{s_i,A_{\ast}}_x)^{\bot}$. Finally, observing that $(A_{\ast})_{ \ast}=A$ the lemma follows.

\end{proof}

\section{Continuity of Lyapunov exponents implies continuity of Oseledets subspaces}

At this section we are going to prove that continuity of Lyapunov exponents implies continuity of Oseledets subspaces. Thus, let $\{A_k\}_k \subset C^0(M)$ be a sequence converging to $A\in C^0(M)$ and suppose $\lim_{k\to \infty}\gamma_i(A_k)= \gamma _i(A)$ for every $1\leq i \leq d$. We start with an auxiliary lemma.

\begin{lemma}\label{ker.zero}
Let $m_k$ be a sequence of $F_{A_k}$-invariant measures concentrated on $E^{1,A_k}$ and suppose they converge to a measure $m$. Then $m(\ker(A))=0$ and moreover $m$ is an $F_A$-invariant measure.
\end{lemma}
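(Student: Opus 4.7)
The proof naturally splits into two parts: first establish $m(\ker(A))=0$ using continuity of the Lyapunov exponents, then bootstrap this into $F_A$-invariance of $m$ via a cutoff-approximation argument.

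\emph{Step 1: $m(\ker(A))=0$.} The key input is Remark~\ref{measure.realize.lyapunov}: since $m_k$ is $F_{A_k}$-invariant and concentrated on $E^{1,A_k}$, one has $\lambda_1(A_k)=\int \varphi_{A_k}\,dm_k$. The function $\varphi_A$ is only upper semicontinuous, taking the value $-\infty$ on $\ker(A)$, but for each $c\in\real$ the truncation $\varphi^c_A:=\max(\varphi_A,c)=\log\max(\norm{A(x)v}/\norm{v},e^c)$ is continuous on $M\times\Pd$, and a short computation based on $\norm{A_k-A}_\infty\to 0$ plus the Lipschitz property of $\log$ on $[e^c,\infty)$ yields $\varphi^c_{A_k}\to \varphi^c_A$ uniformly. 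Combining this with weak-$\ast$ convergence of $m_k$ to $m$,
\[ \lambda_1(A_k) \le \int \varphi^c_{A_k}\,dm_k \longrightarrow \int \varphi^c_A\,dm \quad \text{as } k\to\infty. \]
Passing to the limit on the left and then letting $c\to-\infty$ (monotone convergence from above, valid since $\varphi^c_A\le \log\norm{A}_\infty$) gives $\int\varphi_A\,dm\ge \lambda_1(A)$. Assuming $\lambda_1(A)>-\infty$ (the degenerate case can be handled separately), this forces $m(\ker(A))=0$, since $\varphi_A\equiv -\infty$ there.

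\emph{Step 2: $F_A$-invariance.} Fix a continuous test function $\psi:M\times\Pd\to\real$. Each $m_k$ does not charge $\ker(A_k)$ (at regular points, $\ker(A_k)\cap(\{x\}\times\Pd)\subset \{x\}\times E^{l_k,A_k}_x$, transverse to $E^{1,A_k}_x$), so the invariance of $m_k$ gives $\int \psi\,dm_k=\int \psi\circ F_{A_k}\,dm_k$, and the left-hand side converges to $\int\psi\,dm$. For the right-hand side, introduce the closed sets $V_\epsilon:=\{(x,v)\in M\times\Pd : \norm{A(x)v}\le\epsilon\norm{v}\}$, a decreasing family shrinking to $\ker(A)$. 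On the complement of $V_\epsilon$, the maps $F_{A_k}$ converge uniformly to $F_A$, so $\int_{V_\epsilon^c} \psi\circ F_{A_k}\,dm_k - \int_{V_\epsilon^c} \psi\circ F_A\,dm_k \to 0$. Multiplying $\psi\circ F_A$ by a continuous cutoff $\chi_\epsilon$ vanishing on $V_{\epsilon/2}$ and equal to $1$ outside $V_\epsilon$ produces a genuinely continuous function on $M\times\Pd$, so weak-$\ast$ convergence gives $\int (\psi\circ F_A)\chi_\epsilon\,dm_k \to \int (\psi\circ F_A)\chi_\epsilon\,dm$. The two cutoff errors are bounded by $\norm{\psi}_\infty m_k(V_\epsilon)$ and $\norm{\psi}_\infty m(V_\epsilon)$; by the Portmanteau theorem, $\limsup_k m_k(V_\epsilon)\le m(V_\epsilon)$ since $V_\epsilon$ is closed, and $m(V_\epsilon)\to m(\ker(A))=0$ by Step~1. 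Letting $\epsilon\to 0$ yields $\int \psi\circ F_{A_k}\,dm_k\to \int \psi\circ F_A\,dm$, hence $\int \psi\,dm=\int \psi\circ F_A\,dm$.

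\emph{Main obstacle.} The genuine difficulty is that $F_A$ is discontinuous on $\ker(A)$ and a priori there is no reason the $m_k$ should avoid that set in the limit, especially as $F_{A_k}$ itself is defined using an arbitrary section on $\ker(A_k)$. Step~1, where continuity of the top exponent forces $m(\ker(A))=0$ through the Birkhoff formula for $\lambda_1$, is the heart of the proof; once this is established, Step~2 is a standard weak-$\ast$ limit argument via cutoffs.
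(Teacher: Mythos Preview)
Your proof is correct and follows essentially the same two-step strategy as the paper: use the identity $\gamma_1(A_k)=\int\varphi_{A_k}\,dm_k$ to rule out mass on $\ker(A)$, then exploit this to pass to the limit in the invariance equation by localizing away from $\ker(A)$. The implementations differ only cosmetically---you handle Step~1 via truncations $\varphi_A^c=\max(\varphi_A,c)$ and monotone convergence (arguably cleaner than the paper's contradiction with the sets $K_\delta$), and in Step~2 you use a Urysohn cutoff $\chi_\epsilon$ together with Portmanteau on the closed sets $V_\epsilon$, whereas the paper uses a Tietze extension of $\psi\circ F_A$ off the open sets $K_\delta$; these are interchangeable devices accomplishing the same estimate.
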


\begin{proof}
We start proving that $m(\ker(A))=0$. Suppose by contradiction that $m(\ker(A))=2 c>0$. For each $\delta >0$ let us consider 
\begin{displaymath}
K_{\delta}=\left\{ (x,v)\in M\times \Pd ; \; \norm{A(x)\frac{v}{\norm{v}}}<\delta \right\}.
\end{displaymath}
These are open sets such that $\ker(A)=\cap_{\delta >0} K_{\delta}$ and $m(K_{\delta})\geq m(\ker(A))>c>0$. 

Fix $b\in \real$ such that
$$b< \gamma_1(A)-\sup_{k,x,\norm{v}=1} \log \norm{A_k(x)v}$$ 
and let $\delta>0$ be such that $\log y<\frac{b}{c}$ for every $y<2\delta$. Then, for every $k$ sufficiently large $m_k(K_{\delta})>c>0$ and $\norm{A_k(x)\frac{v}{\norm{v}}}<2\delta$ for every $(x,v)\in K_{\delta}$ and consequently

\begin{displaymath}
\gamma_1(A_k)=\int \varphi_{A_k}d m_k<b+\sup_{k,x,\norm{v}=1} \log \norm{A_k(x)v}
\end{displaymath}
contradicting the choice of $b$. Thus, $m(\ker(A))=0$ as we want.

To prove that $m$ is $F_A$-invariant one only has to show that, given a continuous map $\psi:M\times \Pd\to \real$, 
\begin{equation} \label{eq: FA invariance}
\lim_{k\to \infty} \int \psi\circ F_{A_k} dm_k=\int \psi\circ F_{A} dm.
\end{equation}
Indeed, if \eqref{eq: FA invariance} is true then, since $m_k$ is $F_{A_k}$-invariant, 
\begin{displaymath}
\int \psi \circ F_{A} dm=\lim_{k\to \infty} \int \psi\circ F_{A_k} dm_k=\lim _{k\to \infty} \int \psi dm_k=\int \psi dm.
\end{displaymath}

In order to prove $\eqref{eq: FA invariance}$ we start noticing that
\begin{displaymath}
\begin{split}
\left| \int \psi\circ F_{A_k} dm_k-\int \psi\circ F_{A} dm\right| &\leq  \int \vert \psi\circ F_{A_k}-\psi\circ F_{A} \vert dm_k \\
&+\left| \int \psi\circ F_{A} dm_k-\int \psi\circ F_{A} dm \right|.
\end{split}
\end{displaymath} 

Now observing that, for every $k$ sufficiently large, $\norm{ A_k(x)v/\norm{v}}>\frac{\delta}{2}$ if $(x,v)\in K_{\delta} ^c$ and recalling the definition of semi-projective cocycle it follows that $\psi\circ F_{A_k}$ converges uniformly to $\psi \circ F_{A}$ outside $K_{\delta}$. Given $\varepsilon >0$ let $\delta >0$ be such that $m(\overline{K_{\delta}})<\frac{\epsilon}{2\norm{\psi}}$. Then, taking $k$ sufficiently large such that $\vert \psi\circ F_{A_k} - \psi\circ F_{A}\vert <\epsilon$ outside $K_{\delta}$ and $m_k(\overline{K_{\delta}})<\frac{\epsilon}{2\norm{\psi}}$ we get
\begin{displaymath}
 \int \vert \psi\circ F_{A_k}-\psi\circ F_{A} \vert dm_k< 2\epsilon.
\end{displaymath} 

To bound $\left| \int \psi\circ F_{A} dm_k-\int \psi\circ F_{A} dm \right|$, let $\hat{\psi}:M\times \Pd\to \real$ be a continuous function which is equal to $\psi\circ F_{A}$ outside $K_{\delta}$ and $\norm{\hat{\psi}}\leq \norm{\psi}$. Note that the existence of such a map is guaranteed once again by Tietze extension theorem. Then, 
\begin{displaymath}
\left| \int \psi\circ F_{A} dm_k-\int \psi\circ F_{A} dm \right| \leq \left| \int \hat{\psi}dm_k-\int \hat{\psi}dm\right| +2\epsilon.
\end{displaymath}
Now, taking $k$ sufficiently large such that $ \left| \int \hat{\psi}dm_k-\int \hat{\psi}dm\right|<\epsilon$ it follows that
\begin{displaymath}
\left| \int \psi\circ F_{A_k} dm_k-\int \psi\circ F_{A} dm\right| < 5\epsilon
\end{displaymath} 
proving \eqref{eq: FA invariance} and consequently the lemma.

\end{proof}

\begin{remark}\label{remark.ker.zero}
Observe that in the proof of the previous lemma we didn't use the full strength of the requirement $\lim_{k\to \infty}\gamma_i(A_k)= \gamma _i(A)$ for every $1\leq i\leq d$. Indeed, it is enough that $\lim_{k\to \infty}\int \varphi_{A_k} dm_k>-\infty$. This is going to be used in Section \ref{sec: cont Osel implies cont Lyap}.
\end{remark}

\subsection{Continuity of the fastest Oseledets subspace}
Our next proposition deals with the case when $d_1(A)=1$. That is, the case when the dimension of the Oseledets subspace associated with $\lambda _1(A)$ is $1$.

\begin{proposition}\label{mayor.exponente}
If $A$ is such that $\gamma_1(A)>\gamma_2(A)$ then $E^{1,A_k}_x$ converges to $E^{1,A}_x$ with respect to the measure $\mu$. More precisely, for  every $\delta >0$
\begin{displaymath}
\mu (\{x\in M; \; \measuredangle(E^{1,A_k}_x,E^{1,A}_x)<\delta\})\xrightarrow{k\to \infty}1.
\end{displaymath}
\end{proposition}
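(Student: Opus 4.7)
The plan is to work on $M\times\Pd$ via the semi-projective cocycle: attach to each $A_k$ an $F_{A_k}$-invariant probability measure supported on the top Oseledets graph, pass to a weak$^*$ subsequential limit, and identify this limit as the top Oseledets graph measure of $A$. Since $\gamma_i(A_k)\to\gamma_i(A)$ with multiplicity and $\gamma_1(A)>\gamma_2(A)$, for $k$ large one has $\gamma_1(A_k)>\gamma_2(A_k)$, so $\dim E^{1,A_k}_x=1$. Proposition \ref{lyapunov.measures} furnishes an $F_{A_k}$-invariant probability $m_k$ on $M\times\Pd$ that projects to $\mu$ and is concentrated on $E^{1,A_k}$; since the fibre $E^{1,A_k}_x$ is a single point of $\Pd$ and $\pi_\ast m_k=\mu$, this $m_k$ is necessarily the graph measure $(\mathrm{id}\times s_k)_\ast\mu$, where $s_k(x)=E^{1,A_k}_x$.

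By compactness, extract a subsequential weak$^*$ limit $m_{k_j}\to m$, which still projects to $\mu$. Since $\int\varphi_{A_k}\,dm_k=\lambda_1(A_k)\to\lambda_1(A)>-\infty$ by Remark \ref{measure.realize.lyapunov}, Lemma \ref{ker.zero} applies (via Remark \ref{remark.ker.zero}, one only needs this lower bound on the integrals), giving $m(\ker(A))=0$ and $F_A$-invariance of $m$. Because $\gamma_1(A)>\gamma_2(A)$, Proposition \ref{decomposition} with $i=1$ then decomposes $m=am^{u_1}+bm^{s_1}$, with $a+b=1$, $m^{u_1}$ supported on $E^{1,A}$ and $m^{s_1}$ supported on $E^{s_1,A}$.

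The crux is to force $b=0$. On the one hand, an adaptation of the argument used to prove \eqref{eq: FA invariance} in Lemma \ref{ker.zero}---exploiting $m(\ker(A))=0$ to isolate a small neighborhood $U$ of $\ker(A)$, using the uniform upper bound $\varphi_{A_k}\le\log\|A_k\|_\infty$ together with the Portmanteau estimate $\limsup m_k(\bar U)\le m(\bar U)$ to control the contribution of $U$, and invoking the uniform convergence $\varphi_{A_k}\to\varphi_A$ outside $U$---yields
\[
\int\varphi_A\,dm=\lim_k\int\varphi_{A_k}\,dm_k=\lambda_1(A).
\]
On the other hand, Remark \ref{measure.realize.lyapunov} gives $\int\varphi_A\,dm^{u_1}=\lambda_1(A)$, while Birkhoff's ergodic theorem applied to the $F_A$-invariant measure $m^{s_1}$ carried by $E^{s_1,A}$, where Lyapunov averages are at most $\lambda_2(A)$, gives $\int\varphi_A\,dm^{s_1}\le\lambda_2(A)$. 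Hence
\[
\lambda_1(A)=\int\varphi_A\,dm=a\lambda_1(A)+b\int\varphi_A\,dm^{s_1}\le a\lambda_1(A)+b\lambda_2(A),
\]
and since $\lambda_1(A)>\lambda_2(A)$ this forces $b=0$.

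Therefore $m=(\mathrm{id}\times s)_\ast\mu$ with $s(x)=E^{1,A}_x$. As every subsequential weak$^*$ limit of $\{m_k\}$ coincides with this unique graph measure, the whole sequence satisfies $m_k\to(\mathrm{id}\times s)_\ast\mu$. A standard Lusin--Tietze argument (make $s$ continuous on a compact subset of $M$ of $\mu$-measure close to $1$, then test weak convergence against a continuous extension of the bounded function $(x,v)\mapsto d(v,s(x))$) converts weak$^*$ convergence of these graph measures into convergence in $\mu$-measure of $s_k$ to $s$, which is precisely $\mu(\{x:\measuredangle(E^{1,A_k}_x,E^{1,A}_x)<\delta\})\to 1$ for every $\delta>0$. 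The main obstacle is the integral limit displayed above: $\varphi_A$ is only upper semicontinuous, diverging to $-\infty$ on $\ker(A)$, so passing the limit through the integral requires the neighborhood-of-$\ker(A)$ surgery used in Lemma \ref{ker.zero}, together with careful use of the uniform upper bound on $\varphi_{A_k}$ to kill the $m_k$-mass that approaches $\ker(A)$.
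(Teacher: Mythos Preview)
Your proposal is correct and follows essentially the same route as the paper: graph measures $m_k$ on $E^{1,A_k}$, subsequential weak$^*$ limit, Lemma~\ref{ker.zero} for $F_A$-invariance and $m(\ker A)=0$, Proposition~\ref{decomposition} plus the integral identity $\int\varphi_A\,dm=\gamma_1(A)$ to force $b=0$, and finally Lusin to pass from weak$^*$ convergence of graph measures to convergence in $\mu$-measure of the sections. One small caveat on the step you flag as the main obstacle: an \emph{upper} bound on $\varphi_{A_k}$ together with $\limsup m_k(\bar U)\le m(\bar U)$ only controls $\int_{\bar U}\varphi_{A_k}\,dm_k$ from above, not from below where the danger lies; the clean way to get $\int\varphi_A\,dm=\gamma_1(A)$ is to truncate, using that $\max(\varphi_A,-C)$ is continuous and bounded so that $\gamma_1(A)=\lim_k\int\varphi_{A_k}\,dm_k\le\lim_k\int\max(\varphi_{A_k},-C)\,dm_k=\int\max(\varphi_A,-C)\,dm\downarrow\int\varphi_A\,dm$, while the reverse inequality is immediate from $F_A$-invariance of $m$ (the paper, incidentally, simply asserts this limit without justification).
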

 
\begin{proof}
We start observing that, since $\gamma _j(A_k)\xrightarrow{k\to \infty} \gamma _j(A)$ for every $1\leq j\leq d$ and $\gamma_1(A)>\gamma_2(A)$, for every $k$ sufficiently large $\gamma _1(A_k)>\gamma _2(A_k)$ and thus $E^{1,A_k}_x$ is also one-dimensional. Let us assume without loss of generality that this is indeed the case for every $k\in \mathbb{N}$.

For each $k\in \mathbb{N}$, let us consider the measure
\begin{displaymath}
m_k=\int _{M} \delta _{(x,E^{1,A_k}_{x})} d\mu(x)
\end{displaymath}
and let $m^u$ be the measure given by
\begin{displaymath}
m^u=\int _{M} \delta _{(x,E^{1,A}_{x})} d\mu(x).
\end{displaymath}
Observe that these are, respectively, $F_{A_k}$ and $F_A$-invariant measures on $M\times \Pd$ concentrated on $E^{1,A_k}$ and $E^{1,A}$ and projecting to $\mu$. Consequently, it follows from Remark \ref{measure.realize.lyapunov} that
\begin{equation}\label{eq: measure realizing Lyap exp Ak}
\gamma _1(A_k)=\int _{M\times \Pd} \varphi _{A_k}(x,v)dm_k
\end{equation}
and
\begin{equation}\label{eq: measure realizing Lyap exp A}
\gamma _1(A)=\int _{M\times \Pd} \varphi _{A}(x,v)dm^u.
\end{equation}

We claim now that $m_k$ converges to $m^u$ in the weak$^*$ topology. Indeed, let $\{m_{k_i}\}_{i\in \mathbb{N}}$ be a convergent subsequence of $\{m_{k}\}_{k\in \mathbb{N}}$ and suppose that it converges to $m$. Since $M \times \Pd$ is a compact space it suffices to prove that $m=m^u$. Observing that, for each $i\in \mathbb{N}$ the measure $m_{k_i}$ is an $F_{A_{k_i}}$-invariant measure projecting to $\mu$, it follows from Lemma \ref{ker.zero} that $m$ is an $F_A$-invariant measure projecting to $\mu$ and moreover $m(\ker (A))=0$. Furthermore, since
\begin{displaymath}
\gamma _1(A_{k_i})\xrightarrow{i\to +\infty}\gamma _1(A)
\end{displaymath}
and 
\begin{displaymath}
\int _{M\times \Pd} \varphi _{A_{k_i}}(x,v)dm_{k_i}\xrightarrow{i\to +\infty} \int _{M\times \Pd} \varphi _{A}(x,v)dm
\end{displaymath}
it follows from \eqref{eq: measure realizing Lyap exp Ak} that 
\begin{displaymath}
\gamma _1(A)=\int _{M\times \Pd} \varphi _{A}(x,v)dm.
\end{displaymath}
Thus, from Proposition~\ref{decomposition} we get that $m=m^u$ as claimed. In fact, otherwise we would have $m=am^{u_1}+bm^{s_1}$ where $a,b\in (0,1)$ are such that $a+b=1$ and $m^{s_1}$ is an $F_A$-invariant measure concentrated on $\{(x,E^{s_1}_x); \; x\in M\}$. 
Therefore,
\begin{displaymath}
\begin{split}
\gamma _1(A)&=\int _{M\times \Pd} \varphi _{A}(x,v)dm \\
&=a\int _{M\times \Pd} \varphi _{A}(x,v)dm^{u_1} +b\int _{M\times \Pd} \varphi _{A}(x,v)dm^{s_1} \\
&\leq a \gamma _1(A)+b\gamma _2(A) < \gamma _1(A).
\end{split}
\end{displaymath}

Let us consider now the measurable map $\psi:M \to \Pd$ given by 
\begin{displaymath}
\psi(x)=E^{1, A}_{x}. 
\end{displaymath} 
Note that its graph has full $m^u$-measure. By Lusin's Theorem, given $\varepsilon >0$ there exists a compact set $K\subset M$ such that the restriction $\psi _K$ of $\psi$ to $K$ is continuous and $\mu (K)>1-\varepsilon$. Now, given $\delta >0$, let $V\subset M\times \Pd$ be an open neighborhood of the graph of $\psi_K$ such that
\begin{displaymath}
V\cap (K\times \Pd)\subset V_{\delta}
\end{displaymath}
where
\begin{displaymath}
V_{\delta}:=\lbrace (x,v)\in K\times \Pd ; \; \measuredangle (v, \psi (x))<\delta \rbrace .
\end{displaymath}
By the choice of the measures $m_k$,
\begin{equation}\label{eq: auxiliary eq 1 prop 2}
m_k(V_{\delta})= \mu (\lbrace x\in K; \; \measuredangle (E^{1,A_k}_{x}, E^{1,A}_{x})<\delta \rbrace ).
\end{equation}
Now, as $m_k\xrightarrow{k\rightarrow \infty} m^u$ it follows that $\liminf m_k(V)\geq m^u(V)> 1-\varepsilon$. On the other hand, as $m_k(K\times \Pd)=\mu(K)> 1-\varepsilon$ for every $k\in \mathbb{N}$, it follows that 
\begin{equation}\label{eq: auxiliary eq 2 prop 2}
m_k(V_{\delta})\geq m_k(V \cap (K\times \Pd))\geq 1-2\varepsilon
\end{equation}
for every $k$ large enough. Thus, combining \eqref{eq: auxiliary eq 1 prop 2} and \eqref{eq: auxiliary eq 2 prop 2}, we get that $\mu(\lbrace x\in M; \; \measuredangle (E^{1,A_k}_{x}, E^{1,A}_{x})<\delta \rbrace)\geq 1-2\varepsilon$ for every $k$ large enough completing the proof of the proposition.

\end{proof}

\subsection{Continuity of the Oseledets fast subspace of order $i$}
We now prove that the Oseledets fast subspace of order $i$ of $A_k$ converges to the respective Oseledets subspace of $A$. The idea is to consider the cocycle induced by $A$ on a suitable exterior power and then deduce the general case from the previous one. 

\begin{proposition}\label{prop: continuity i fastest Osel}
For every $1\leq i\leq l$ and $\delta >0$ we have that
\begin{displaymath}
\mu (\{x\in M; \; \measuredangle(E^{u_i,A_k}_x,E^{u_i,A}_x)<\delta\})\xrightarrow{k\to \infty}1.
\end{displaymath}
\end{proposition}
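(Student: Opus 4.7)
The strategy is to reduce to Proposition~\ref{mayor.exponente} by passing to an exterior power. The case $i=l$ is vacuous since $E^{u_l,A}_x=\real^d$, so fix $1\le i<l$ and set $n=\tilde{d}_i=d_1(A)+\cdots+d_i(A)$. Consider the induced cocycles $B=\Lambda^n A$ and $B_k=\Lambda^n A_k$ acting on $\Lambda^n(\real^d)$. The exterior power operation is continuous in the uniform topology, so $B_k\to B$. Moreover, the Lyapunov exponents of $\Lambda^n A$ (counted with multiplicity) are the sums $\gamma_{j_1}(A)+\cdots+\gamma_{j_n}(A)$ over strictly increasing multi-indices $j_1<\cdots<j_n$, so the hypothesis $\gamma_j(A_k)\to\gamma_j(A)$ immediately yields convergence of the full Lyapunov spectrum of $B_k$ to that of $B$.

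The top exponent of $B$ equals $\gamma_1(B)=\gamma_1(A)+\cdots+\gamma_n(A)=d_1\lambda_1(A)+\cdots+d_i\lambda_i(A)$, while the second-largest is $\gamma_1(B)-\lambda_i(A)+\lambda_{i+1}(A)$, strictly smaller. Hence $\gamma_1(B)>\gamma_2(B)$, and a standard application of the multiplicative ergodic theorem identifies the resulting one-dimensional top Oseledets subspace of $B$ with the Plücker image $\bigwedge^n E^{u_i,A}_x$. For $k$ sufficiently large, convergence of exponents forces $\gamma_n(A_k)>\gamma_{n+1}(A_k)$, and the same reasoning identifies the top Oseledets line of $B_k$ with $\bigwedge^n E^{u_i,A_k}_x$, where $E^{u_i,A_k}_x$ denotes the sum of the Oseledets subspaces of $A_k$ corresponding to its top $n$ Lyapunov exponents (counted with multiplicity).

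Applying Proposition~\ref{mayor.exponente} to $B_k\to B$ now yields convergence in measure of $\bigwedge^n E^{u_i,A_k}_x$ to $\bigwedge^n E^{u_i,A}_x$ inside $\P(\Lambda^n(\real^d))$. To transfer this back to the Grassmannian one uses the standard fact about the Plücker embedding that if $\theta_1,\dots,\theta_n$ are the principal angles between two $n$-dimensional subspaces $V,W\subset\real^d$, then unit decomposable representatives $\omega_V,\omega_W$ of $\bigwedge^n V,\bigwedge^n W$ satisfy $\ip{\omega_V,\omega_W}=\prod_{j=1}^n \cos\theta_j$. Thus smallness of the projective angle between $\bigwedge^n V$ and $\bigwedge^n W$ forces each $\theta_j$ to be small, and hence the Grassmannian distance $\d(V,W)$ to be small; the converse implication is immediate. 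Consequently, convergence in measure lifts from the projective exterior power back to $\textrm{Grass}(n,d)$, completing the proof.

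The main technical obstacle is the identification of the top Oseledets line of $\Lambda^n A$ with the Plücker image of $E^{u_i,A}_x$: one must verify, via the multiplicative ergodic theorem applied to the exterior cocycle, that $\tfrac{1}{m}\log\norm{\Lambda^n A^m(x)\omega}$ is maximized precisely when $\omega$ is an $n$-wedge of vectors spanning $E^{u_i,A}_x$, and that the maximum strictly dominates all other Oseledets directions of $\Lambda^n A$. Once this identification is in place, the rest reduces to the routine dictionary between the Grassmannian and its Plücker image inside projective space sketched above, together with the already-established Proposition~\ref{mayor.exponente}.
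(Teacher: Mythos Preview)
Your proposal is correct and follows essentially the same approach as the paper: pass to the $r$th exterior power with $r=\tilde d_i$, observe that the top Lyapunov exponent of $\Lambda^r A$ is simple with one-dimensional Oseledets subspace equal to the Pl\"ucker image of $E^{u_i,A}_x$, apply Proposition~\ref{mayor.exponente}, and then transfer the convergence back to the Grassmannian via the equivalence of the Pl\"ucker distance with the Grassmannian distance. The only cosmetic difference is that the paper cites this distance equivalence directly, whereas you justify it via the principal-angle formula $\ip{\omega_V,\omega_W}=\prod_j\cos\theta_j$.
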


Before we proceed to the proof let us just recall some facts about Grassmannian manifolds and exterior powers that we are going to use in the sequel. For a more detailed explanation we just refer to \cite{LLE}.

For every $1\leq j\leq d$ we denote by $\exteriorj$ the $j$th \textit{exterior power} of $\real ^d$ which is the space of alternate $j$-linear forms on the dual $(\real ^d)^{\ast}$. If $\wedge$ denotes the exterior product of vectors of $\real ^d$ then a basis for $\exteriorj$ is given by $\{e_{i_1}\wedge \ldots \wedge e_{i_j}; \; 1\leq i_1<\ldots <i_j\leq l\}$ whenever $\{e_i\}_{i=1}^{d}$ is a basis for $\real ^d$. We may also consider the exterior product $V\wedge W$ of subspaces $V$ and $W$ of $\real ^d$. This is defined as the exterior product of the elements of any basis of $V$ with the elements of any basis of $W$. Any linear map $L\in \mbox{M}(d,\real)$ induces a linear map $\Lambda ^jL:\exteriorj \to \exteriorj$ by 
\begin{displaymath}
\Lambda ^jL (\omega): \phi _1\wedge \ldots \wedge \phi _j\to \phi _1\circ L\wedge\ldots \wedge\phi _j\circ L. 
\end{displaymath}
Hence, a linear cocyle generated by $B:M\to M(d,\real )$ over $f$ induces a linear cocycle over $f$ on the $j$th exterior power which is generated by the map $x\to \Lambda ^jB(x)$. Moreover, if $B$ satisfies the integrability condition so does $\Lambda ^jB$ and its Lyapunov exponents are given by 
\begin{equation} \label{Lyapunov exterior power}
\{\gamma _{i_1}(B) +\ldots +\gamma _{i_j}(B); \; 1\leq i_1<\ldots < i_j\leq l\}.
\end{equation}
Furthermore, Oseledets subspaces of $\Lambda ^jB$ are strongly related with the ones of $B$. In particular, for every $1\leq i\leq l$ the Osleledets subspace of $\Lambda ^{d_1(B)+\ldots +d_i(B)}B$ at the point $x\in M$ associated to $\gamma _{1}(B)+\gamma _2(B) +\ldots +\gamma _{d_1(B)+\ldots +d_i(B)}(B)$ is given by
\begin{equation}\label{Oseledets exterior power}
E^{1,B}_x\wedge \ldots \wedge E^{i,B}_x.
\end{equation}
This is all we are going to use about the Oseledets subspaces of induced cocycle.

Let $\grassj$ denote \textit{Grassmannian manifold} of $j$-dimensional subspaces of $\real ^d$. The map $\psi :\grassj \to \P (\exteriorj )$ which assigns to each subspace $E\in \grassj$ the projective point $[v]\in \P (\exteriorj )$, where $v=v_1\wedge \ldots \wedge v_j$ and $\{v_1,\ldots ,v_j\}$ is any basis for $E$, is an embedding known as the \textit{Pl\"ucker embedding}. Therefore, if $\rho (.,.)$ is a distance on $\P (\exteriorj )$ we may push it back to $\grassj$ via $\psi$. More precisely, the map $\d_{\exteriorj}:\grassj\times \grassj\to \real$ given by
\begin{displaymath}
\d_{\exteriorj} (E_1,E_2)=\rho (\psi (E_1),\psi (E_2))
\end{displaymath}
is a distance on $\grassj$ and moreover, if $\rho$ is a distance given by an inner product in the linear space $\exteriorj$ then $\d_{\exteriorj}$ is equivalent to the distance defined in \eqref{def:distancia}.

\begin{proof}[Proof of Proposition \ref{prop: continuity i fastest Osel}]
Observe that if $i=l$ then there is noting to do since $E^{u_l,A_k}_x=\real ^d=E^{u_l,A}_x$ for every $k$ sufficiently large. So, from now on let us assume $i<l$. 

Consider $r=d_1(A)+\ldots +d_i(A)$ and let $\Lambda ^rA$ and $\Lambda ^rA_k$ be the cocycles over $f$ induced by $A$ and $A_k$, respectively, on the $r$th exterior power. Since we are assuming $i<l$ it follows from \eqref{Lyapunov exterior power} that $\gamma _1 (\Lambda ^rA)>\gamma _2 (\Lambda ^rA)$. Thus, from Proposition \ref{mayor.exponente} we get that, for every $\delta' >0$, 
\begin{displaymath}
\mu (\{x\in M; \; \measuredangle(E^{1,\Lambda ^rA_k}_x,E^{1,\Lambda ^rA}_x)<\delta'\})\xrightarrow{k\to \infty}1
\end{displaymath}
which from \eqref{Oseledets exterior power} is equivalent to
\begin{displaymath}
\mu (\{x\in M; \; \measuredangle(E^{1,A_k}_x\wedge \ldots \wedge E^{i,A_k}_x,E^{1,A}_x\wedge \ldots \wedge E^{i,A}_x)<\delta'\})\xrightarrow{k\to \infty}1.
\end{displaymath}
Consequently, from the definition of $\d_{\Lambda ^r(\real ^d)}$ it follows that
\begin{displaymath}
\mu (\{x\in M; \; \d_{\Lambda ^r(\real ^d)}(E^{1,A_k}_x \oplus \ldots \oplus E^{i,A_k}_x,E^{1,A}_x\oplus \ldots \oplus E^{i,A}_x)<\delta ' \})\xrightarrow{k\to \infty}1.
\end{displaymath}
Now, using the fact that the distances $\d_{\Lambda ^r(\real ^d)}$ and $\d$ are equivalent it follows that for every $\delta >0$, 
\begin{displaymath}
\mu (\{x\in M; \; \measuredangle(E^{1,A_k}_x \oplus \ldots \oplus E^{i,A_k}_x,E^{1,A}_x\oplus \ldots \oplus E^{i,A}_x)<\delta\})\xrightarrow{k\to \infty}1
\end{displaymath}
as we want.
\end{proof}

As a simple consequence of the previous proposition applied to adjoint cocycles $A_{k _{\ast}}$ and $A_{\ast}$ combined with Lemma \ref{adjoint.spaces} we get that
\begin{corollary}\label{cor: weakest Oseledets}
For every $0\leq i\leq l-1$ and $\delta >0$ we have that
\begin{displaymath}
\mu (\{x\in M; \; \measuredangle(E^{s_i,A_k}_x,E^{s_i,A}_x)<\delta\})\xrightarrow{k\to \infty}1.
\end{displaymath}
\end{corollary}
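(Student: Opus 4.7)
The plan is to deduce this from Proposition \ref{prop: continuity i fastest Osel} applied to the sequence of adjoint cocycles, followed by orthogonal complementation via Lemma \ref{adjoint.spaces}. The case $i=0$ is trivial, since then $E^{s_0,A_k}_x = \real^d = E^{s_0,A}_x$ for every $k$ and every $x$, so the indicated measure is identically $1$. From now on fix $1\leq i\leq l-1$.

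First, I would check that the hypotheses of Proposition \ref{prop: continuity i fastest Osel} hold for the adjoint cocycles. The map $B\mapsto B^T$ on $M(d,\real)$ is a linear isometry, so the assignment $A\mapsto A_\ast$ is continuous on $C^0(M)$ with the uniform topology; hence $A_{k_\ast}\to A_\ast$ in $C^0(M)$. Moreover, as recalled just before Lemma \ref{adjoint.spaces}, the Lyapunov exponents of the adjoint cocycle, counted with multiplicities, coincide with those of the original cocycle. Thus the standing assumption $\gamma_j(A_k)\to \gamma_j(A)$ for every $j$ translates into $\gamma_j(A_{k_\ast})\to \gamma_j(A_\ast)$ for every $j$; in particular $l(A_\ast)=l(A)$ and $d_j(A_\ast)=d_j(A)$, so the identifications in Lemma \ref{adjoint.spaces} are dimensionally coherent.

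With these hypotheses in place, Proposition \ref{prop: continuity i fastest Osel} applied to $\{A_{k_\ast}\}_k$, $A_\ast$ and the same index $i$ yields, for every $\delta'>0$,
\begin{displaymath}
\mu\bigl(\{x\in M;\; \measuredangle(E^{u_i,A_{k_\ast}}_x,E^{u_i,A_\ast}_x)<\delta'\}\bigr)\xrightarrow{k\to\infty} 1.
\end{displaymath}
By Lemma \ref{adjoint.spaces} we have $E^{s_i,A_k}_x=(E^{u_i,A_{k_\ast}}_x)^\bot$ and $E^{s_i,A}_x=(E^{u_i,A_\ast}_x)^\bot$, so the conclusion will follow once one observes that orthogonal complementation is an isometry of the Grassmannian with respect to the gap distance $\d$, and hence preserves angles: for all subspaces $E,F$ of $\real^d$ one has $\measuredangle(E,F)=\measuredangle(E^\bot,F^\bot)$. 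Choosing $\delta'=\delta$ then gives the corollary.

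I do not anticipate a serious obstacle: the continuity of the adjoint map and the equality of adjoint Lyapunov spectra are already recorded in the paper, and the invariance of the gap metric under orthogonal complementation is a standard fact about principal angles, so the whole argument is a short packaging step.
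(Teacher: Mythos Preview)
Your proposal is correct and follows exactly the approach the paper indicates: apply Proposition \ref{prop: continuity i fastest Osel} to the adjoint cocycles $A_{k_\ast}\to A_\ast$ and then pass to orthogonal complements via Lemma \ref{adjoint.spaces}. The paper states this as a one-line consequence without details; you have simply spelled out the verification of hypotheses and the (standard) fact that orthogonal complementation preserves the gap distance, so there is nothing to add.
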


\subsection{Proof of the direct implication of Theorem \ref{teo.principal}}

The \textit{cone} of radius $\alpha >0$ around a subspace $V$ of $\real ^d$ is defined as  
\begin{displaymath}
C_{\alpha}(V)=\left\{ w_1+w_2 \in V \ \oplus V^{\perp}; \; \norm{w_2}<\alpha \norm{w_1}\right\}.
\end{displaymath}
Observe that this is equivalent to 
\begin{displaymath}
C_{\alpha}(V)=\left\{ w\in \real^d ; \; \d\left(\frac{w}{\norm{w}},V\right)<\alpha\right\}
\end{displaymath}
where $\d$ is the distance defined in \eqref{def:distancia}.

In order to prove the direct implication of our main theorem we are going to need the following auxiliary result.

\begin{lemma}\label{intersection}
Given $1\leq i\leq l$, $\epsilon>0$ and $\delta>0$ there exist a subset $K=K(\epsilon)\subset M$ with $\mu(K)>1-\epsilon$ and $\delta'=\delta'(\epsilon,\delta)>0$, such that for every $x\in K$, 
$$ C_{\delta'}(E^{u_i,A}_x)\cap C_{\delta'}(E^{s_{i-1},A}_x)\subset C_{\delta}(E^{i,A}_x).$$
\end{lemma}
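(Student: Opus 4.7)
The plan is to exploit the three-fold direct sum decomposition
$$\real^d = E^{u_{i-1},A}_x \oplus E^{i,A}_x \oplus E^{s_i,A}_x$$
(with the conventions $E^{u_0,A}_x = \{0\}$ and $E^{s_l,A}_x = \{0\}$), under which $E^{u_i,A}_x$ is the sum of the first two summands, $E^{s_{i-1},A}_x$ is the sum of the last two, and $E^{i,A}_x$ is the middle summand alone---that is, precisely their intersection. The extreme cases $i = 1$ and $i = l$ are vacuous because then one of the two cones equals $\real^d$, so one assumes $1 < i < l$. Let $P_1(x), P_2(x), P_3(x)$ denote the oblique projections onto the three summands along the sum of the other two; these are measurable and $\mu$-a.e.\ finite-valued functions of $x$.

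Next, I would use Lusin's theorem (together with measurability of the Oseledets splitting) to extract a compact set $K = K(\epsilon) \subset M$ with $\mu(K) > 1 - \epsilon$ on which every map $x \mapsto E^{j,A}_x$ is continuous. On such a $K$ the projections $P_j(\cdot)$ are continuous as well, hence uniformly bounded: there is a constant $C = C(\epsilon) > 0$ with $\norm{P_j(x)} \leq C$ for every $x \in K$ and every $j \in \{1,2,3\}$.

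Once $K$ and $C$ are in hand, the conclusion is a one-line linear-algebra computation. Fix $x \in K$ and any $w \in C_{\delta'}(E^{u_i,A}_x) \cap C_{\delta'}(E^{s_{i-1},A}_x)$, and write $w = w_1 + w_2 + w_3$ with $w_j = P_j(x)w$. The first inclusion yields some $u \in E^{u_i,A}_x$ with $\norm{w - u} < \delta' \norm{w}$; applying $P_3(x)$, which annihilates $u \in E^{u_i,A}_x$, gives $\norm{w_3} \leq C\delta'\norm{w}$. The symmetric argument using $E^{s_{i-1},A}_x$ and $P_1(x)$ gives $\norm{w_1} \leq C\delta'\norm{w}$. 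Since $w_2 \in E^{i,A}_x$, one obtains
$$\d(w, E^{i,A}_x) \leq \norm{w - w_2} \leq \norm{w_1} + \norm{w_3} \leq 2C\delta'\norm{w},$$
so the choice $\delta'(\epsilon,\delta) = \delta/(2C(\epsilon))$ forces $w \in C_{\delta}(E^{i,A}_x)$, as required.

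The only real content is Step 2: the operator norms $\norm{P_j(x)}$ are controlled by the angles between the three Oseledets summands, which are positive only $\mu$-a.e.\ rather than uniformly so. The Lusin step is precisely what lets one trade a set of measure at most $\epsilon$ for a uniform bound on those projections, and this is the only place where the measure-theoretic nature of the statement enters. Everything else is formal linear algebra, and the dimensional edge cases $i \in \{1,l\}$ require no argument beyond noting that one of the cones is $\real^d$.
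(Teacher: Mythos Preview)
Your proof is correct and follows essentially the same strategy as the paper's: use Lusin's theorem to obtain a large-measure set on which the Oseledets splitting (and hence the geometry of the three-fold decomposition) is uniformly controlled, then carry out a direct linear-algebra estimate. The only cosmetic difference is that the paper phrases the uniform control via a measurable family of inner products $\langle\,,\,\rangle_x$ making the Oseledets subspaces orthogonal (and then compares the adapted norm to the standard one by a constant $C$), whereas you bound the oblique projections $P_j(x)$ directly; the two formulations are equivalent, and your choice $\delta' = \delta/(2C)$ plays the same role as the paper's $\delta' = \delta/(4C^2)$.
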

\begin{proof}
For every regular point $x\in M$ we can define an inner product $\ip{ , }_x$ on $\real ^d$ such that $\{E^{i,A}_x\}_{i=1}^{l}$ are mutually orthogonal. Moreover, this family of inner products may be chosen to be measurable. Let $K\subset M$ be a compact subset of $M$ with $\mu(K)>1-\epsilon$ and such that $\ip{ , }_x$ is continuous when restricted to $K$. Then, there exists $C>1$ such that $\frac{1}{C}\norm{v}\leq\norm{v}_x\leq C\norm{v}$. Take $\delta':=\frac{\delta}{4C^2}>0$.

Given $v\in C_{\delta'}(E^{u_i,A}_x)\cap C_{\delta'}(E^{s_{i-1},A}_x)$, for every $x\in K$ we can write $v=v_{i}+v_{u_{i-1}}+v_{u_{i}}^{\perp}$ where
\begin{displaymath}
v_{i}=\mbox{Proj}_{E^{i,A}_x}(v)\mbox{, }\; v_{u_{i-1}}=\mbox{Proj}_{E^{u_{i-1},A}_x}(v)\; \mbox{ and } \; v_{u_{i}}^{\perp} =\mbox{Proj} _{({E^{u_i,A}_x})^{\perp}}(v).
\end{displaymath}

 Analogously $v=v_{i}+v_{s_i}+v_{s_{i-1}}^{\perp}$. From the definition of cone we get that $\norm{v_{u_i}^{\perp}}<\delta '$ and $\norm{v_{s_{i-1}}^{\perp}}<\delta'$ and consequently, 
\begin{equation*}
\norm{v_{s_i}-v_{u_{i-1}}}<2\delta'.
\end{equation*}
Now, from the definition of $v_{s_i}$ and $v_{u_{i-1}}$ and the choice of $C$ it follows that
\begin{equation*}
\norm{v_s}_x<2C\delta'.
\end{equation*}
Consequently,
\begin{equation*}
\norm{v_s}<2C^2\delta'<\frac{\delta}{2}
\end{equation*}
and thus, if $v=v_{i}+v_{i}^{\perp}$ then 
\begin{equation*}
\norm{v_{i}^{\perp}}\leq \norm{v_{s_i}+v_{s_{i-1}}^{\perp}} <\delta
\end{equation*}
which implies that $v\in C_{\delta}(E^{i,A}_x)$ as we want.
\end{proof}

Given $\varepsilon >0$, let $K\subset M$ and $\delta '>0$ be given by the previous lemma. Proposition \ref{prop: continuity i fastest Osel} and Corollary \ref{cor: weakest Oseledets} gives us that for every $1\leq i\leq l$ and $k$ sufficiently large the sets
 $$A^{u_i}=\lbrace x\in M; \; \measuredangle(E^{u_i,A_k}_x,E^{u_i,A}_x)\geq \delta'\rbrace$$
 and
 $$A^{s_{i-1}}=\lbrace x\in M; \; \measuredangle(E^{s_{i-1},A_k}_x,E^{s_{i-1},A}_x)\geq \delta'\rbrace$$
are such that $\mu(A^{u_i})<\epsilon$ and $\mu(A^{s_{i-1}})<\epsilon$. Now, observing that, for $x\notin A^{u_i}\cup A^{s_{i-1}}$ and $k$ sufficiently large,
$$E^{i,A_k}_x=E^{u_i,A_k}_x\cap E^{s_{i-1},A_k}_x\subset C_{\delta'}(E^{u_i,A}_x)\cap C_{\delta'}(E^{s_{i-1},A}_x)$$
 it follows from Lemma \ref{intersection} that, for every $x\in K\setminus(A^{u_i}\cup A^{s_{i-1}})$ and $k$ sufficiently large, $E^{i,A_k}_x \subset C_{\delta}(E^{i,A}_x)$. Consequently, $\mu (\{x\in M; \; \measuredangle(E^{i,A_k}_x,E^{i,A}_x)<\delta'\}) \geq 1-3\varepsilon$ for every $k$ sufficiently large as we want.

\section{Continuity of Oseledets subspaces implies continuity of Lyapunov exponents} \label{sec: cont Osel implies cont Lyap}

This section is devoted to prove the reverse implication of Theorem \ref{teo.principal}. So, let $\{A_k\}_k \subset C^0(M)$ be a sequence converging to $A\in C^0(M)$ and suppose that for every $k$ sufficiently large there exists a direct sum decomposition $\mathbb{R}^d=F^{1,A_k}_{x}\oplus \ldots \oplus F^{l,A_k}_{x}$ into vector subspaces such that 
\begin{itemize}
\item[i)] $F^{i,A_k}_{x}=E^{j,A_k}_{x}\oplus E^{j+1,A_k}_{x}\oplus \ldots \oplus E^{j+t,A_k}_{x}$ for some $j\in \{1,\ldots ,l_k\}$ and $t\geq 0$;
\item[ii)] $\mbox{dim}(F^{i,A_k}_{x})= \mbox{dim}(E^{i,A}_{x})$ for every $i=1,\ldots ,l$
\end{itemize}
and moreover that
\begin{itemize}
\item[iii)] for every $\delta >0$ and $1\leq i\leq l$ we have 
$$\mu\left(\{ x\in M; \measuredangle(F^{i,A_k}_x,E^{i,A}_x )>\delta \} \right) \xrightarrow{k\to \infty} 0.$$
\end{itemize}

Given $1\leq i < l$, we start proving that if  
$$\mu\left(\lbrace x\in M\mbox{, }\measuredangle(F^{i,A_k}_x,E^{i,A}_x)>\delta\rbrace \right) \to 0$$ for every $\delta>0$ then $\gamma _j(A_k) \to \gamma _j(A)$ for every $d_0(A)+d_1(A)+\ldots + d_{i-1}(A)< j \leq d_1(A)+\ldots + d_{i}(A)$ where $d_0(A)=0$.

For each $k\in \mathbb{N}$, let $m_k$ be an $F_{A_k}$-invariant measure supported on $\lbrace (x,v)\in M\times \Pd ; \; v\in F^{i,A_k}_x\rbrace $ which projects to $\mu$ and such that
\begin{equation}\label{eq: def of m_k last sec}
\gamma _j(A_k)=\int \varphi_{A_k}(x,v)d m_k.
\end{equation}
The existence of such a measure is guaranteed by Proposition \ref{lyapunov.measures} and Remark \ref{measure.realize.lyapunov}. Passing to a subsequence we may assume that $m_k$ converges in the weak$^{\ast}$ topology to some measure $m$. From Lemma \ref{ker.zero} it follows that $m$ is an $F_A$-invariant measure projecting to $\mu$ and moreover that $m(\ker (A))=0$. To conclude the proof it suffices to observe that $m$ is supported on $\lbrace (x,v)\in M\times \Pd ; \; v\in E^{i,A}_x\rbrace$. Indeed, if that is the case then invoking Remark \ref{measure.realize.lyapunov} we get
$$\lim_{k\to \infty}\gamma_j(A_k)=\lim _{k\to \infty} \int \varphi_{A_k} dm_k=\int \varphi_{A} dm=\gamma_j(A)$$ 
for every $d_0(A)+d_1(A)+\ldots + d_{i-1}(A)< j \leq d_1(A)+\ldots + d_{i}(A)$ as we want.

Given $\varepsilon>0$, let $K\subset M$ be a compact set with $\mu(K)>1-\frac{\varepsilon}{2}$ and such that $E_x^{i,A}$ is continuous when restricted to $K$. For each $\delta >0$ let us consider 
\begin{displaymath}
G_{\delta}=\lbrace (x,v)\in K\times \Pd ;\; \measuredangle(v,E^{i,A}_x)\leq \delta\rbrace.
\end{displaymath}
This is a closed set and thus, by the weak$^{\ast}$ convergence of the sequence $\{m_k\}_k$, 
\begin{equation}\label{measure of G delta}
m(G_{\delta}) \geq \limsup_{k\to \infty} m_k(G_{\delta}).
\end{equation}
Since $m_k$ projects to $\mu$ it follows by Rokhlin's disintegration theorem that $m_k$ can be written as $m_k=\int m^k_{x}d \mu(x)$ where $\{m_x^k\}_{x\in M}$ are measures on $\Pd$. Moreover, from the choice of $m_k$ it follows that $m_x^k(F^{i,A_k}_x)=1$ for $\mu$-almost every $x\in M$. Consequently,
\begin{equation}\label{G delta set}
m_k(G_{\delta}) =\int m_x^k(G_{\delta})d\mu \geq 1-\mu\left(K^c\cup \lbrace x\in M; \;\measuredangle(F^{i,A_k}_x,E^{i,A}_x)>\delta\rbrace \right).
\end{equation}

Now, let $k_{\delta}\in \mathbb{N}$ be such that $\mu\left(\lbrace x\in M; \;\measuredangle(F^{i,A_k}_x,E^{i,A}_x)>\delta\rbrace \right)<\frac{\varepsilon}{2}$ for every $k\geq k_{\delta}$. Thus, invoking \eqref{G delta set} we get that $m_k(G_{\delta})\geq 1-\varepsilon$ for every $\delta >0$ as far as $k\geq k_{\delta}$. Hence, it follows from \eqref{measure of G delta} that $m(G_{\delta})\geq 1-\varepsilon$ for every $\delta >0$. Consequently,
\begin{displaymath}
\begin{split}
m(\lbrace (x,v)\in M\times \Pd ;\; v\in E^{i,A}_x\rbrace)&\geq m(\lbrace (x,v)\in K\times \Pd; \;v\in E^{i,A}_x\rbrace)\\
&\geq \lim _{\delta \to 0}m(G_{\delta})\geq 1-\varepsilon
\end{split}
\end{displaymath}
Since $\varepsilon >0$ is arbitrary we conclude that  $m(\lbrace (x,v)\in M\times \Pd ;\; v\in E^{i,A}_x\rbrace)=1$ as claimed.

It remains to consider the case when $i=l$. If $\lambda _l(A)>-\infty$, then the previous argument also works for this case. Otherwise, if $\lambda _l(A)=-\infty$ it suffices to prove that $\gamma_j(A_k)\to -\infty$ for $j=d_1(A)+\ldots + d_{l-1}(A)+1$. Suppose that is not the case, that is, $\limsup_{k\to\infty}\gamma_j(A_k)>-\infty$. Passing to a subsequence, if necessary, we may assume that $\lim_{k\to\infty} \gamma_j(A_k)=a>-\infty$ and moreover that the sequence of measures $\{m_k \}_k$ given as in \eqref{eq: def of m_k last sec} converge to some measure $m$. It follows then from Remark \ref{remark.ker.zero} that $m$ is an $F_A$-invariant measure and $m(\ker(A))=0$. Proceeding as we did in the previous case we conclude that $m(E^{l,A})=1$ and 
$$-\infty > a=\lim_{k\to \infty}\gamma_j(A_k)=\lim _{k\to \infty} \int \varphi_{A_k} dm_k=\int \varphi_{A} dm.$$
On the other hand, Birkhoff's ergodic theorem implies that $\int \varphi_{A} dm=-\infty$ which gives us a contradiction. Therefore, $\gamma_j(A_k)\to -\infty$ for $j=d_1(A)+\ldots + d_{l-1}(A)+1$ and hence $\gamma_j(A_k)\to -\infty$ for every $j \in \{ d_1(A)+\ldots + d_{l-1}(A)+1,\ldots , d \}$ completing the proof of Theorem \ref{teo.principal}.


\medskip{\bf Acknowledgements.} We thank to Professor Marcelo Viana for useful discussions regarding the content of this work. We also thank to the anonymous referee for suggestions that helped to improve this presentation. M. Poletti was partially supported by CAPES and CNPq-Brazil.


\bibliography{bibliography}

\end{document}